\newtheorem{theorem}{Theorem}[section]
\newtheorem{lemma}[theorem]{Lemma}
\newtheorem{proposition}[theorem]{Proposition}
\newtheorem{corollary}[theorem]{Corollary}
 \theoremstyle{definition}
 \newtheorem{defn}[theorem]{Definition}
 \newtheorem{remark}[theorem]{Remark}
\newcommand{\F}{\mathbb{F}}
\newcommand{\N}{\mathbb{N}}
\newcommand{\Ker}{\operatorname{Ker}}
\DeclareMathOperator{\Gal}{Gal}
\DeclareMathOperator{\gen}{gen}
\newcommand{\GL}{{\mathrm{GL}}}
\newcommand{\PGL}{{\mathrm{PGL}}}
\newcommand{\nMat}[2]{\mathrm{M}_{#2}(#1)}
\begin{document}

\author{Uriya First}
\email{uriya.first@gmail.com}
\thanks{Uriya First  was supported, in part, by a postdoctoral research fellowship from the University of British Columbia.}
\address{Department of Mathematics\\University of Haifa\\ Mount Carmel, Haifa, Israel 31905}

\author{Zinovy Reichstein}
\email{reichst@math.ubc.ca}
\thanks{Zinovy Reichstein was partially supported by
National Sciences and Engineering Research Council of
Canada (NSERC) grant No. 250217-2012.}

\author{Santiago Salazar}
\email{santiago.salazar@zoho.com}
\address{Department of Mathematics\\University of British Columbia\\ BC, Canada V6T 1Z2}
\thanks{Santiago Salazar's work on this project was conducted in the framework of NSERC's Undergraduate Summer 
Research Awards (USRA) program at the University of British Columbia.}  

\title[Generators of separable algebras]{On the number of generators of a separable algebra over a finite field}

\keywords{Finite fields, \'etale algebras, M\"obius inversion, separable algebras, number of generators}
\subjclass[2010]{12E20, 13E15, 16H05, 16P10}

\begin{abstract} Let $F$ be a field
and let $E$ be an \'etale algebra over $F$,
that is, a finite product of finite separable field extensions
$E = F_1 \times \dots \times F_r$. 
The classical primitive element theorem asserts that if $r = 1$, then
$E$ is generated by one element as an $F$-algebra. The same is true for
any $r \geqslant 1$, provided that $F$ is infinite. However, if $F$ is a finite field
and $r \geqslant 2$, the primitive element theorem fails in general. In this paper we 
give a formula for the minimal number of generators of $E$ when $F$ is finite.
We also obtain upper and lower bounds on the number of generators of 
a (not necessarily commutative) separable algebra over a finite field.
\end{abstract}

\maketitle
\tableofcontents

\section{Introduction}
The primitive element theorem asserts that a separable field extension $E/F$ of finite degree 
can be generated by one element; see, e.g.,~\cite[Theorem V.4.6]{lang}. It is natural to ask 
if the same is true for every \'etale algebra $E/F$. Recall that an \'etale algebra 
$E$ over a field $F$ is a finite  product $E = F_1 \times \dots \times F_r$, where each $F_i$ is 
a finite separable field extension of $F$. 
If $F$ is an infinite field, then the primitive element theorem
continues to hold: $E$ is generated by one element as an $F$-algebra; see, e.g., \cite[Proposition 4.1]{fr}.
On the other hand, if
 \begin{equation} \label{e.algebra}
 E := \F_{q^{n_1}} \times \dots \times \F_{q^{n_r}}.
 \end{equation}
is an \'etale algebra over a finite field $F = \F_q$ of $q$ elements, the primitive element theorem may fail.
In this paper we will find the minimal number of generators for $E$ as an $\F_q$-algebra.
We will denote this number by $\gen(E)$.

We will call an \'etale $\F_q$-algebra $E$ in~\eqref{e.algebra} pure if $n_1 = \dots = n_r$.
Any \'etale algebra $E$ can be written 
as a  product $E_1 \times \dots \times E_t$, where each $E_i$ is pure, $E_i \cong (\F_{q^{n_i}})^{r_i}$,
and $n_1, \dots, n_t$ are distinct. In Section~\ref{sec:reduction} we will show
that $\gen(E) = \max \{ \gen(E_1), \dots, \gen(E_t) \}$; see Proposition~\ref{prop.pure}.
This reduces the problem of computing $\gen(E)$ to the case where $E$ is pure.
In this case, we will prove the following formula for $\gen(E)$
in Section~\ref{sec:etale}. 

\begin{theorem} \label{thm.main1}
Let $E = \F_{q^n} \times \dots \times \F_{q^n}$ ($r$ times). Then
$\gen(E)$ is the minimal non-negative integer $g$ such that 
$ r \leqslant \dfrac{1}{n}  \displaystyle{\sum\limits_{d \, | \, n}^{\quad} \mu(d)q^{\frac{gn}{d}}}$.
\end{theorem}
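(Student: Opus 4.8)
The plan is to translate the condition ``$a_1,\dots,a_g$ generate $E$ as an $\F_q$-algebra'' into a combinatorial condition on a configuration of $r$ points in $\F_{q^n}^g$, and then to count the admissible configurations. The M\"obius inversion in the last step is exactly what produces the sum $\sum_{d\mid n}\mu(d)q^{gn/d}$.

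First I would establish a description of generating sets. Given $a_1,\dots,a_g\in E$, write $a_i=(a_i^{(1)},\dots,a_i^{(r)})$ with $a_i^{(j)}\in\F_{q^n}$, and set $P_j=(a_1^{(j)},\dots,a_g^{(j)})\in\F_{q^n}^g$ for $1\leqslant j\leqslant r$. The claim is that $a_1,\dots,a_g$ generate $E$ if and only if \emph{(i)} the coordinates of each $P_j$ generate $\F_{q^n}$ over $\F_q$, and \emph{(ii)} no two of $P_1,\dots,P_r$ lie in the same orbit of $\Gal(\F_{q^n}/\F_q)$ acting coordinatewise. The forward direction is straightforward: the image of the subalgebra $\F_q[a_1,\dots,a_g]$ under the $j$-th projection is the subfield of $\F_{q^n}$ generated by the coordinates of $P_j$, which must be all of $\F_{q^n}$, giving (i); and if $P_{j'}=\sigma(P_j)$ for some $\sigma\in\Gal(\F_{q^n}/\F_q)$, then the $j$-th and $j'$-th components of every element of $\F_q[a_1,\dots,a_g]$ are related by $\sigma$, so this subalgebra cannot be all of $E$, giving (ii). For the converse I would invoke the elementary classification of subalgebras of $\F_{q^n}\times\F_{q^n}$ that surject onto both factors: each such subalgebra is either the whole ring or the graph $\{(x,\sigma x):x\in\F_{q^n}\}$ of some $\sigma\in\Gal(\F_{q^n}/\F_q)$. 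Condition (ii) excludes the graph case for every pair $j\neq j'$, so for each pair there is $f_{j,j'}\in\F_q[x_1,\dots,x_g]$ with $f_{j,j'}(P_j)=1$ and $f_{j,j'}(P_{j'})=0$; then the product $\prod_{j'\neq j}f_{j,j'}$, evaluated at $(a_1,\dots,a_g)$, equals the $j$-th primitive idempotent of $E$. Thus all primitive idempotents lie in $\F_q[a_1,\dots,a_g]$, and together with (i) this forces $\F_q[a_1,\dots,a_g]=E$.

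With this in hand, $\gen(E)\leqslant g$ holds if and only if $\F_{q^n}^g$ contains $r$ points that each generate $\F_{q^n}$ over $\F_q$ and lie in pairwise distinct $\Gal(\F_{q^n}/\F_q)$-orbits, equivalently, if and only if the number of such orbits is at least $r$. A point $P\in\F_{q^n}^g$ whose coordinates generate $\F_{q^n}$ has trivial stabilizer in $\Gal(\F_{q^n}/\F_q)$, so its orbit has size exactly $n$; hence the number of such orbits equals $\frac1n$ times the number of points $P\in\F_{q^n}^g$ generating $\F_{q^n}$. Partitioning $\F_{q^n}^g$ according to the subfield generated by the coordinates gives $q^{gn}=\sum_{d\mid n}\#\{P\in\F_{q^d}^g:\F_q(P)=\F_{q^d}\}$, and M\"obius inversion yields $\#\{P\in\F_{q^n}^g:\F_q(P)=\F_{q^n}\}=\sum_{d\mid n}\mu(d)q^{gn/d}$. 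Therefore $\gen(E)\leqslant g$ if and only if $r\leqslant\frac1n\sum_{d\mid n}\mu(d)q^{gn/d}$. Since appending a zero coordinate shows the right-hand side is non-decreasing in $g$, and it tends to $\infty$, the least $g$ satisfying this inequality is precisely $\gen(E)$, which is the assertion.

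The step I expect to be the main obstacle is the ``if'' direction of the description of generating sets: upgrading the pairwise separation supplied by the two-factor classification to a single polynomial realizing each primitive idempotent inside $\F_q[a_1,\dots,a_g]$. Once that is in place, the remainder is a routine partition-and-M\"obius-inversion count together with the monotonicity observation.
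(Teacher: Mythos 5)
Your proof is correct, and it shares the paper's core counting step while handling the reduction differently. The counting half -- partitioning $(\F_{q^n})^g$ by the subfield generated by the coordinates, noting that a tuple generating $\F_{q^n}$ has trivial Galois stabilizer (hence an orbit of size exactly $n$), and applying M\"obius inversion -- is exactly the content of Lemma~\ref{lem.ideal-count} and Proposition~\ref{prop.Mobius}. Where you diverge is in proving that $\gen(E)\leqslant g$ is equivalent to the existence of $r$ generating points of $\F_{q^n}^g$ in pairwise distinct Galois orbits: the paper goes through maximal ideals of $P_g=\F_q[x_1,\dots,x_g]$ and quotes the Chinese Remainder Theorem (Proposition~\ref{lem.crt} together with Lemma~\ref{lem.ideals}(b)), using that distinct maximal ideals are automatically comaximal; you instead argue directly on the points, classifying the subalgebras of $\F_{q^n}\times\F_{q^n}$ that surject onto both factors (a Goursat-type argument) and assembling each primitive idempotent as the product $\prod_{j'\neq j}f_{j,j'}$ of separating polynomials. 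This is a valid, self-contained substitute: your $f_{j,j'}$ are precisely witnesses that the kernels $J_j=\{f\in P_g : f(P_j)=0\}$ are pairwise comaximal, and the product construction is the standard proof of CRT surjectivity, so in effect you re-derive the special case of CRT that the paper cites. The paper's route is shorter and is reused verbatim for the noncommutative separable algebras later on; yours gives an explicit, elementary criterion for when a $g$-tuple generates $(\F_{q^n})^r$, with the idempotents exhibited concretely. One small remark: once you know that $\gen(E)\leqslant g$ holds if and only if $r\leqslant\frac1n\sum_{d\mid n}\mu(d)q^{gn/d}$ for every $g\geqslant 0$, the identification of $\gen(E)$ with the minimal such $g$ is immediate, so your closing monotonicity observation, while true, is not actually needed.
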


Here the sum is taken over all positive divisors $d$ of $n$, and  $\mu: \N \to \{-1, 0, 1\}$ denotes the
M\"obius function. 
In Section~\ref{sec.explicit} we will prove the following consequence of this formula.

\begin{theorem}\label{thm.main2}
Let $E = \F_{q^n} \times \dots \times \F_{q^n}$ ($r$ times).
Then 
\[\lceil\textstyle\dfrac{1}{n }\log_q(nr) \rceil \leqslant \gen (E)\leqslant 
\lceil\textstyle\dfrac{1}{n }\log_q(nr)\rceil +1\ .\]
\end{theorem}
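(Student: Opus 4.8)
The plan is to derive both inequalities from Theorem~\ref{thm.main1} together with elementary estimates on the M\"obius sum occurring there. Write
\[
f(g) \;:=\; \frac{1}{n}\sum_{d\mid n}\mu(d)\,q^{gn/d},
\]
so that, by Theorem~\ref{thm.main1}, $\gen(E)$ is the least integer $g\ge 0$ with $f(g)\ge r$. Recall that for $g\ge 1$, $f(g)$ is exactly the number of monic irreducible polynomials of degree $n$ over $\F_{q^g}$. The two bounds I will use are
\[
f(g)\le \frac{q^{gn}}{n}\quad\text{for }g\ge 1,
\qquad\text{and}\qquad
f(g)\ge \frac{q^{gn}}{2n}\quad\text{whenever }q^{gn}\ge 16 .
\]
The first follows by counting roots: a monic irreducible of degree $n$ over $\F_{q^g}$ has $n$ distinct roots in $\F_{q^{gn}}$, distinct such polynomials have disjoint sets of roots, and $\F_{q^{gn}}$ has $q^{gn}$ elements, so $nf(g)\le q^{gn}$. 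For the second, $nf(g)=q^{gn}+\sum_{d\mid n,\,d\ge 2}\mu(d)q^{gn/d}\ge q^{gn}-\sum_{i=1}^{\lfloor n/2\rfloor}q^{gi}$, and summing the geometric series with $q\ge 2$ gives $nf(g)\ge q^{gn}-2q^{gn/2}\ge q^{gn}/2$ once $q^{gn}\ge 16$.

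For the lower bound in Theorem~\ref{thm.main2}, set $g=\gen(E)$. If $g=0$, then $f(0)\ge r$ forces $n=r=1$ and the claimed inequalities are trivial, so assume $g\ge 1$. From $r\le f(g)\le q^{gn}/n$ we get $nr\le q^{gn}$, hence $g\ge \tfrac{1}{n}\log_q(nr)$, and as $g$ is an integer, $g\ge\lceil\tfrac{1}{n}\log_q(nr)\rceil$.

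For the upper bound, put $g_1=\lceil\tfrac{1}{n}\log_q(nr)\rceil+1$; by Theorem~\ref{thm.main1} it suffices to show $f(g_1)\ge r$, for then $\gen(E)\le g_1$. Since $g_1\ge \tfrac{1}{n}\log_q(nr)+1$, we have $q^{g_1 n}\ge nr\,q^{n}$. If $n=1$, this already gives $f(g_1)=q^{g_1}\ge qr\ge r$. If $n\ge 2$, then $nr\ge 2$ forces $\lceil\tfrac{1}{n}\log_q(nr)\rceil\ge 1$, so $g_1\ge 2$ and $q^{g_1 n}\ge 2^{4}=16$; applying the second estimate and then $q^{n}\ge 4$ yields $f(g_1)\ge q^{g_1 n}/(2n)\ge rq^{n}/2\ge r$, as desired.

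The argument is short once Theorem~\ref{thm.main1} is available; the only point requiring care is the lower estimate on $f(g)$, both in pinning down the constant $1/2$ and in confirming that its range of validity $q^{gn}\ge 16$ automatically contains the value $g=g_1$ at which it is invoked. The case split $n=1$ versus $n\ge 2$ above is precisely what makes this automatic (for $n\ge 2$ one gets $g_1\ge 2$ for free, hence $q^{g_1 n}\ge 16$), so no separate treatment of small $q$ and $n$ is needed.
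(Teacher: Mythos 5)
Your argument is correct, and it follows the same skeleton as the paper: by Theorem~\ref{thm.main1} (equivalently, Lemma~\ref{lem.ideals} plus Proposition~\ref{prop.Mobius}), $\gen(E)$ is the least $g$ with $r\leqslant f(g)=N_{q,n}(g)$, and one then traps $g$ using upper and lower estimates of the form $c\,q^{gn}/n\leqslant f(g)\leqslant q^{gn}/n$. Where you differ is in the key lower estimate and how it is applied. The paper (Lemma~\ref{lem.bounds-commutative}(b)) proves $N_{q,n}(g)\geqslant \frac{1}{n}q^{gn}(1-\frac1q)$ for all $g,n\geqslant 1$, via a case analysis on the prime factorization of $n$ using $\tau(n)\leqslant 2\sqrt n$; you instead prove the cruder bound $f(g)\geqslant \frac{1}{2n}q^{gn}$ valid only when $q^{gn}\geqslant 16$, by a one-line geometric-series estimate, and you correctly verify that the validity condition holds at the single point where you invoke it (your case split $n=1$ versus $n\geqslant 2$, giving $g_1\geqslant 2$, does exactly this). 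You also check the candidate value $g_1=\lceil\frac1n\log_q(nr)\rceil+1$ directly by showing $f(g_1)\geqslant r$, rather than sandwiching $g=\gen(E)$ between $N_{q,n}(g-1)<r\leqslant N_{q,n}(g)$ as the paper does, and your upper bound $f(g)\leqslant q^{gn}/n$ comes from reading $f(g)$ as the number of monic irreducible polynomials of degree $n$ over $\F_{q^g}$ and counting roots, rather than from the orbit count in Lemma~\ref{lem.ideal-count}; both are equivalent in substance. Your route is a bit more elementary (no divisor-function case analysis), at the cost of a weaker constant with a side condition; the paper's sharper, unconditional Lemma~\ref{lem.bounds-commutative}(b) has the additional advantage of applying at $g=1$ for all $q,n$, which the paper reuses in the proof of Corollary~\ref{cor.fr}, where your conditional bound would not suffice as stated.
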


Here, as usual, $\lceil x \rceil$ denotes the smallest integer $n$ such that $x \leqslant n$.
When $n=1$, $\gen(E)=\lceil \log_qr\rceil$; see Corollary~\ref{cor.n=1}.
For $n>1$ both values for $\gen(E)$ allowed by Theorem~\ref{thm.main2} actually occur;
see Theorem~\ref{thm.main4} below.

More generally, we will be interested in the minimal number of generators $\gen(A)$ of a
(not necessarily commutative) separable algebra $A$. All algebras in this paper will be assumed 
to be associative with $1$. Recall that an algebra $A$ over a field $F$ is called separable
if $A$ is finite-dimensional, semisimple  and its center is an \'etale $F$-algebra.  
If $F$ is an infinite field, then $\gen(A) = 1$ if $A$ is commutative and $\gen(A) = 2$
otherwise; see \cite[Remark 4.4]{fr}. Thus, our question is only of interest 
if $F = \F_q$ is a finite field. In this case, by theorems of Wedderburn 
(see, e.g.,~\cite[Theorem~2.1.8]{rowen} and \cite[Theorem~7.1.11]{rowenII}), 
$A$ is isomorphic to a product of matrix algebras over finite fields, i.e.,
\[
A =  \nMat{\F_{q^{n_1}}}{m_1\times m_1}\times\dots
\times \nMat{\F_{q^{n_t}}}{m_t\times m_t} \, . 
\]
Note that if $m_1 = \dots m_t = 1$, then $A$ is an \'etale algebra. 

Once again, Proposition~\ref{PR:pure-red-separable} reduces the problem of computing $\gen(A)$ to
the case where $A$ is pure, i.e.\ $(m_1,n_1)=\dots=(m_t,n_t)$.
Our main result for pure algebras is as follows.

\begin{theorem}\label{thm.main3}
Let $A = \nMat{\F_{q^n}}{m\times m} \times \dots \times \nMat{\F_{q^n}}{m\times m}$ ($r$ times).
Then 
\[\lceil\dfrac{1}{n m^2}\log_q(C\cdot r) \rceil \leqslant \gen (A)\leqslant 
\lceil\dfrac{1}{n m^2}\log_q(C\cdot r)\rceil +1\ ,\]
where  $C:=n\cdot |\PGL_m(\F_{q^n})|=\dfrac{n\prod_{i=0}^{m-1}(q^{nm}-q^{ni})}{q^n-1}$.
\end{theorem}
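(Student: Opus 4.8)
The plan is to reduce the non-commutative case to a counting problem about orbits of a group action, in complete analogy with the proof of Theorem 1.2 (thm.main1) for étale algebras, and then to estimate the relevant count. First I would identify $\gen(A)$ for $A = \nMat{\F_{q^n}}{m\times m}^{\times r}$ with a covering-type condition: a $g$-tuple $(a_1,\dots,a_g) \in A^g$ generates $A$ as an $\F_q$-algebra if and only if, writing $a_j = (a_j^{(1)},\dots,a_j^{(r)})$ with $a_j^{(i)} \in \nMat{\F_{q^n}}{m\times m}$, for each $i$ the tuple $(a_1^{(i)},\dots,a_g^{(i)})$ generates the $i$-th factor, and moreover no two of these $r$ ``component tuples'' are simultaneously conjugate over $\F_q$ by an element of $\GL_m(\F_{q^n})$ together with a Galois twist — i.e.\ the $r$ component tuples must lie in $r$ distinct orbits of the natural action of $\Gal(\F_{q^n}/\F_q) \ltimes \PGL_m(\F_{q^n})$ on the set $\mathcal{G}_g$ of generating $g$-tuples of $\nMat{\F_{q^n}}{m\times m}$. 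Hence $\gen(A) \le g$ iff the number $N_g$ of such orbits is at least $r$, so $\gen(A)$ is the least $g$ with $N_g \ge r$.

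Next I would estimate $N_g$. The group $G := \Gal(\F_{q^n}/\F_q) \ltimes \PGL_m(\F_{q^n})$ has order $C = n\cdot|\PGL_m(\F_{q^n})|$ and acts on $\mathcal{G}_g \subseteq (\nMat{\F_{q^n}}{m\times m})^g$, a set whose size is $q^{n m^2 g}(1 - o(1))$ as $g$ grows; more precisely $|\mathcal{G}_g|$ differs from $q^{nm^2 g}$ by a lower-order term, which one controls exactly the way the M\"obius sum in Theorem~\ref{thm.main1} controls the count of generators of a single field factor (a generating tuple of $\nMat{\F_{q^n}}{m\times m}$ over $\F_q$ is one not contained in any proper subalgebra defined over a subfield or a parabolic, and an inclusion–exclusion over these gives the correction). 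Since $G$ acts with stabilizers of bounded size, $N_g$ lies between $|\mathcal{G}_g|/|G|$ and $|\mathcal{G}_g|$, and a sharper lower bound comes from Burnside/Cauchy–Frobenius: the number of orbits is at least $|\mathcal{G}_g|/|G|$, with the fixed-point contributions of non-identity elements of $G$ being negligible. This yields $N_g = \dfrac{q^{nm^2 g}}{C}(1 + o(1))$, and feeding this into the condition $N_g \ge r$ gives $\gen(A) \approx \dfrac{1}{nm^2}\log_q(C r)$, with the $\pm 1$ slack absorbing the error terms — exactly as Theorem~\ref{thm.main2} is deduced from Theorem~\ref{thm.main1}.

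To make the two-sided bound precise with the stated constant $C$, I would argue as follows. For the \emph{lower} bound on $\gen(A)$: if $(a_1,\dots,a_g)$ generates $A$, then its $r$ component orbits are distinct, so $r \le N_g \le |\mathcal{G}_g| \le q^{nm^2 g}$, hence $nm^2 g \ge \log_q r$; to get the sharper $\log_q(Cr)$ one instead bounds $r$ by the number of \emph{orbits}, which is at most $|\mathcal{G}_g|/|H_{\min}|$ where one must be careful — actually the clean inequality is $r \le |\mathcal{G}_g|$ giving $g \ge \lceil \frac{1}{nm^2}\log_q r\rceil$, and the factor $C$ enters the lower bound through the orbit-counting refinement that each orbit has size at most $C$, so $N_g \le |\mathcal{G}_g|/(\text{min orbit size})$ is the wrong direction; the correct statement is $N_g \ge |\mathcal{G}_g|/C$, which is what powers the \emph{upper} bound on $\gen(A)$: taking $g = \lceil\frac{1}{nm^2}\log_q(Cr)\rceil + 1$ makes $q^{nm^2 g} \ge q^{nm^2}\cdot Cr$, comfortably forcing $N_g \ge |\mathcal{G}_g|/C \ge r$ once the $o(1)$ error is dominated by the extra factor $q^{nm^2}$. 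For the lower bound on $\gen(A)$ one uses $N_g \le |\mathcal{G}_g| \le q^{nm^2 g}$ together with the observation that when $g < \frac{1}{nm^2}\log_q(Cr)$ one can still have $q^{nm^2 g} < Cr$; bridging from "$< Cr$" to "$< r$" requires knowing $|\mathcal{G}_g|$ is not much smaller than $q^{nm^2 g}$ and that the orbit count is not much smaller than $|\mathcal{G}_g|/C$ — i.e.\ the average orbit is close to full size, which holds because a generating tuple has trivial $\PGL_m$-stabilizer (its centralizer in $\nMat{\F_{q^n}}{m\times m}$ is the center, as the tuple generates the whole matrix algebra) and small Galois stabilizer.

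The main obstacle, and the step deserving the most care, is precisely this last point: controlling the orbit count $N_g$ tightly enough on \emph{both} sides to land inside an interval of length $1$. The upper bound on $\gen(A)$ is robust (Burnside gives $N_g \ge |\mathcal{G}_g|/|G|$ cleanly, and $|\mathcal{G}_g|$ is close to $q^{nm^2g}$). The lower bound is the delicate one: I must show $N_g$ is \emph{not} much larger than $|\mathcal{G}_g|/C$, which amounts to showing that "most" generating tuples have \emph{full} $G$-orbit, i.e.\ trivial stabilizer in $G$. Since a generating $g$-tuple has centralizer equal to the scalars (so trivial $\PGL_m$-stabilizer), the only issue is the Galois part: I need to bound the number of generating tuples fixed by a non-trivial element $\sigma\tau \in G$ with $\tau \in \Gal$ non-trivial, and show these form a negligible fraction. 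A tuple fixed by such an element is, up to conjugacy, defined over a proper subfield, so the count of such tuples is $O(q^{nm^2 g/ \ell})$ for $\ell = $ the order of $\tau$ (times $|\PGL_m|$ for the conjugating factor), which is indeed negligible compared to $q^{nm^2 g}/C$ for $g$ large — but making "$g$ large enough" compatible with "the interval has length $1$" is where I expect to spend real effort, likely mirroring the explicit M\"obius estimates used to pass from Theorem~\ref{thm.main1} to Theorem~\ref{thm.main2}. Finally, the closed form $C = \dfrac{n\prod_{i=0}^{m-1}(q^{nm}-q^{ni})}{q^n-1}$ is just the standard order formula $|\GL_m(\F_{q^n})| = \prod_{i=0}^{m-1}(q^{nm}-q^{ni})$ divided by $|\F_{q^n}^\times| = q^n - 1$, times $n = |\Gal(\F_{q^n}/\F_q)|$, which I would record as a one-line remark.
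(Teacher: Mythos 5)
Your overall skeleton (identify $\gen(A)$ with the least $g$ such that the number of orbits of $G=\Gal(\F_{q^n}/\F_q)\ltimes\PGL_m(\F_{q^n})$ on generating $g$-tuples of $\nMat{\F_{q^n}}{m\times m}$ is at least $r$, then sandwich that orbit count) is the right one and matches the paper. But there are two genuine gaps, and you have the difficulty located on the wrong side. First, the bound $\gen(A)\geqslant\lceil\frac{1}{nm^2}\log_q(Cr)\rceil$ does not need any Burnside/fixed-point estimate or any ``negligible fraction'' argument: every $\F_q$-algebra automorphism of $\nMat{\F_{q^n}}{m\times m}$ (inner \emph{or} Galois-twisted) that fixes a generating tuple fixes the subalgebra it generates, i.e.\ everything, hence is the identity. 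So every $G$-orbit on generating tuples has size exactly $C$, giving the exact identity $N_{q,n,m}(g)=|S_g|/C\leqslant C^{-1}q^{gnm^2}$, and the lower bound on $\gen(A)$ follows at once. Your proposed substitute --- bounding tuples fixed by nontrivial Galois-twisted elements and absorbing them as an error term valid only ``for $g$ large enough'' --- cannot prove the theorem, which is an exact statement with slack $1$ for \emph{all} $r$ (equivalently all $g\geqslant 2$): an inequality of the form $Cr\leqslant(1+\epsilon_g)q^{gnm^2}$ does not yield $Cr\leqslant q^{gnm^2}$, and the ceiling can genuinely move in boundary cases.

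Second, the step you call ``robust'' is the one that actually carries the weight of the proof and is missing from your write-up: the uniform lower bound $|S_g|\geqslant q^{(g-1)nm^2}$ (equivalently $N_{q,n,m}(g)\geqslant C^{-1}q^{(g-1)nm^2}$) for \emph{every} $g\geqslant 2$, which is what makes $g=\lceil\frac{1}{nm^2}\log_q(Cr)\rceil+1$ suffice. Your justification --- that $|S_g|=q^{gnm^2}(1-o(1))$ can be controlled ``exactly the way the M\"obius sum'' works in the commutative case --- does not go through: there is no M\"obius-type formula for $N_{q,n,m}(g)$ when $m\geqslant 2$ (a tuple with trivial stabilizer need not generate, cf.\ the triple of upper-triangular matrices in the paper's Remark~\ref{rem.assoc}), and the easy count of \emph{spanning} tuples is vacuous when $g<nm^2$, e.g.\ at $g=2$, precisely where the bound is still needed. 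One must either carry out a genuine union bound over the maximal $\F_q$-subalgebras of $\nMat{\F_{q^n}}{m\times m}$ with explicit constants valid at $g=2$, or do what the paper does: exhibit an explicit family of at least $q^{nm^2}$ generating pairs $(A+\gamma I_m,B)$ (Lemma~\ref{lem.bounds-noncommutative}) and pad them by arbitrary $(g-2)$-tuples, using $S_2\times\nMat{\F_{q^n}}{m\times m}^{g-2}\subseteq S_g$. Without this quantitative input your argument only yields the asymptotic version of the theorem, not the stated two-sided bound for all $r$.
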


When $m = 1$, the constant $C$ is $n$ and Theorem~\ref{thm.main3} reduces to
Theorem~\ref{thm.main2}. Note, however, that our proof of Theorem~\ref{thm.main3} relies 
on Theorem~\ref{thm.main2}.

In the case where $A$ is non-commutative, we do not have an explicit formula for the value of $\gen(A)$, 
analogous to Theorem~\ref{thm.main1}; see Remark~\ref{rem.assoc}. However,
our final result, proved in Section~\ref{sec:I},
estimates how frequently each of the two values 
for $\gen(A)$ allowed by Theorem~\ref{thm.main3} is assumed.

\begin{theorem} \label{thm.main4} Fix positive integers $n$ and $m$, and a prime power $q$.
Set
\[ \text{$A_r = \nMat{\F_{q^n}}{m\times m} \times \dots \times \nMat{\F_{q^n}}{m\times m}$ ($r$ times)} \]
and  
let $C$ be as in Theorem~\ref{thm.main3}. 
Let $I_0(g)$ denote the set of integers $r$ such that
$\gen(A_r) = g = \lceil\dfrac{1}{n m^2}\log_q(C\cdot r)\rceil$ and let $I_1(g)$ denote the set of integers $r$ such that
$\gen(A_r) = g = \lceil\dfrac{1}{n m^2}\log_q(C\cdot r)\rceil + 1$. 
Then:
\begin{enumerate}
	\item[{\rm(a)}] $I_0(g)\sqcup I_1(g+1)=\N\cap (C^{-1}q^{(g-1)nm^2},C^{-1}q^{gnm^2}]$
	\item[{\rm(b)}] $|I_0(g)|= C^{-1}(q^{gnm^2}-q^{(g-1)nm^2})(1-O(q^{-g}))$ as a function of $g$.
	\item[{\rm(c)}] 
	$|I_1(g)| \geqslant \lfloor C^{-1} q^{(g-1)m^2} \rfloor$, if $n \geqslant 2$.
	\item[{\rm(d)}] 
	$|I_1(g)|\geqslant \lfloor C^{-1} q^{(g-1)n(m^2 - m + 1)} \rfloor $, if $m \geqslant 2$.
\end{enumerate}
\end{theorem}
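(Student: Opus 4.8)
The proof is based on the following reformulation of $\gen$, already implicit in the proof of Theorem~\ref{thm.main3}. Write $B = \nMat{\F_{q^n}}{m\times m}$ and let $X_g \subseteq B^g$ be the set of $g$-tuples generating $B$ as an $\F_q$-algebra. An $\F_q$-algebra automorphism of $B$ preserves the center $\F_{q^n}$, so by Skolem--Noether the group $\Gamma := \mathrm{Aut}_{\F_q}(B)$ fits into a short exact sequence $1 \to \PGL_m(\F_{q^n}) \to \Gamma \to \Gal(\F_{q^n}/\F_q) \to 1$, split by the matrix-entrywise action of $\Gal(\F_{q^n}/\F_q)$; hence $|\Gamma| = n\,|\PGL_m(\F_{q^n})| = C$. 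The group $\Gamma$ acts coordinatewise on $X_g$, and freely, since an automorphism fixing a generating set is trivial; put $N_g := |X_g|/C$, the number of $\Gamma$-orbits on $X_g$, a non-negative integer. Identifying a $g$-tuple of elements of $A_r = B^{\times r}$ with an $r$-tuple $(v_1,\dots,v_r)$ with $v_j \in B^g$, a Goursat-type lemma for finite products of simple algebras (obtained by induction on $r$ from the classical Goursat lemma) shows that this $g$-tuple generates $A_r$ precisely when each $v_j$ lies in $X_g$ and $v_1,\dots,v_r$ lie in pairwise distinct $\Gamma$-orbits. Consequently $\gen(A_r) = \min\{g : N_g \geqslant r\}$. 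We will use two estimates: $N_g \leqslant C^{-1}q^{gnm^2}$ (since $X_g \subseteq B^g$); and, writing $\nu_g := q^{gnm^2} - |X_g|$ for the number of non-generating $g$-tuples, every such tuple lies in one of the finitely many maximal proper $\F_q$-subalgebras of $B$ --- say $c_0 = c_0(q,n,m)$ of them, each of $\F_q$-dimension at most $nm^2 - 1$ --- so $\nu_g \leqslant c_0\, q^{g(nm^2-1)}$ and $N_g = C^{-1}(q^{gnm^2} - \nu_g)$.

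Part (a) is a direct unwinding of definitions: $\lceil \tfrac{1}{nm^2}\log_q(Cr)\rceil = g$ is equivalent to $C^{-1}q^{(g-1)nm^2} < r \leqslant C^{-1}q^{gnm^2}$, and Theorem~\ref{thm.main3} forces $\gen(A_r) \in \{g, g+1\}$ for such $r$; those $r$ with $\gen(A_r) = g$ form $I_0(g)$ and those with $\gen(A_r) = g+1$ form $I_1(g+1)$, giving (a). For part (b), the identity $\gen(A_r) = \min\{g : N_g \geqslant r\}$ together with $N_g \leqslant C^{-1}q^{gnm^2}$ shows that $I_0(g) = \N \cap (C^{-1}q^{(g-1)nm^2}, N_g]$ whenever $N_g \geqslant C^{-1}q^{(g-1)nm^2}$, which holds for all $g$ past a bound depending only on $q,n,m$ because $N_g = C^{-1}q^{gnm^2}\bigl(1 - O(q^{-g})\bigr)$. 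Hence $|I_0(g)| = N_g - \lfloor C^{-1}q^{(g-1)nm^2}\rfloor = C^{-1}(q^{gnm^2} - q^{(g-1)nm^2}) - C^{-1}\nu_g + O(1)$, and since $\nu_g = O(q^{g(nm^2-1)})$ and $C = O(q^{nm^2})$, the last two terms contribute a relative error $O(q^{-g})$; for the finitely many small $g$ the set $I_0(g)$ may be empty, which the asymptotic assertion permits.

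Parts (c) and (d) go the same way. Unwinding definitions, $I_1(g)$ is the set of integers $r$ with $C^{-1}q^{(g-2)nm^2} < r \leqslant C^{-1}q^{(g-1)nm^2}$ and $\gen(A_r) = g$, and among these $\gen(A_r) = g$ exactly when $r > N_{g-1}$. Using $N_{g-1} \leqslant C^{-1}q^{(g-1)nm^2}$ and the integrality of $N_{g-1}$, once $N_{g-1} \geqslant C^{-1}q^{(g-2)nm^2}$ (again automatic for all but boundedly many $g$) one gets $|I_1(g)| = \lfloor C^{-1}q^{(g-1)nm^2}\rfloor - N_{g-1} = \lfloor C^{-1}\nu_{g-1}\rfloor$. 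It remains to bound $\nu_{g-1}$ below by the number of $(g-1)$-tuples taking values in a single well-chosen proper subalgebra $S \subsetneq B$: for (c), when $n \geqslant 2$, take $S = \nMat{\F_q}{m\times m}$, of $\F_q$-dimension $m^2$, so $\nu_{g-1} \geqslant q^{(g-1)m^2}$; for (d), when $m \geqslant 2$, take $S$ to be the block-upper-triangular subalgebra of $B$ with diagonal blocks of sizes $1$ and $m-1$, of $\F_{q^n}$-dimension $(m-1)^2 + (m-1) + 1 = m^2-m+1$ and hence $\F_q$-dimension $n(m^2-m+1)$, so $\nu_{g-1} \geqslant q^{(g-1)n(m^2-m+1)}$. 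Monotonicity of $\lfloor\,\cdot\,\rfloor$ then yields both stated bounds. The main obstacle is the reformulation of the first paragraph --- in particular pinning down $|\Gamma| = C$ and the Goursat lemma for several simple factors; after that the only real care needed is the floor-function bookkeeping and the boundedly many small values of $g$ preceding the "clean identity" phase, where the claimed estimates are either vacuous (the relevant floor equals $0$) or reduce to the elementary inequality $\lfloor C^{-1}q^{(g-1)nm^2}\rfloor - \lfloor C^{-1}q^{(g-2)nm^2}\rfloor \geqslant \lfloor C^{-1}|S|^{g-1}\rfloor$, which follows from $|S| \leqslant q^{nm^2-1}$.
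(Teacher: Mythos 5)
Your proposal is correct and follows essentially the same route as the paper: the interval descriptions $I_0(g)=\N\cap(C^{-1}q^{(g-1)nm^2},N_{q,n,m}(g)]$ and $I_1(g)=\N\cap(N_{q,n,m}(g-1),C^{-1}q^{(g-1)nm^2}]$ (your $N_g$ is exactly the paper's $N_{q,n,m}(g)$, obtained there via ideals of $\F_q\langle X_1,\dots,X_g\rangle$ and the Chinese Remainder Theorem rather than your Goursat formulation), followed by the upper bounds on $N_{q,n,m}$ coming from the proper subalgebras $\nMat{\F_q}{m\times m}$ and the block-triangular algebra of dimension $n(m^2-m+1)$ for parts (c) and (d). The only genuine variation is your proof of $N_g=C^{-1}q^{gnm^2}\bigl(1-O(q^{-g})\bigr)$ by covering non-generating tuples with the finitely many maximal proper subalgebras, where the paper instead counts $g$-tuples spanning $\nMat{\F_{q^n}}{m\times m}$ as an $\F_q$-vector space (full-rank $nm^2\times g$ matrices); both yield the same estimate, so this is a cosmetic difference.
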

Here, as usual, $\lfloor x \rfloor$ denotes the largest integer $n$ such that $n \leqslant x$.

If $(n, m) = (1,1)$, then $I_1(g) = \emptyset$ for every $g$; see Corollary~\ref{cor.n=1}.
If $(n, m) \neq (1, 1)$, then Theorem~\ref{thm.main4} tells us that for any sufficiently 
large integer  $g$, $I_0(g)$ and $I_1(g)$ are both non-empty. In other words, for each sufficiently large $g$,
there exist integers $r_1$ and $r_2$ such that 
\[ 
\gen(A_{r_1}) = g = \lceil\dfrac{1}{n m^2}\log_q(C\cdot r_1)\rceil
\quad\text{and}\quad
\gen(A_{r_2}) = g = \lceil\dfrac{1}{n m^2}\log_q(C\cdot r_2)\rceil + 1.
\]
On the other hand, if we let $r$ range over the interval $[1, R]$, then the probability that
$\gen(A_r) = \lceil\dfrac{1}{n m^2}\log_q(C\cdot r) \rceil$ rapidly approaches $1$ as $R$ increases.

\section{Reduction to the case of pure algebras}
\label{sec:reduction}

We begin with the following well-known version of
the Chinese Remainder Theorem. For lack of a suitable reference 
we include a proof of the implication (a) $\Longrightarrow$ (b).

\begin{proposition}[Chinese Remainder Theorem]\label{lem.crt}
	Let $R$ be a (not necessarily commutative) ring and let $I_1,\dots,I_t \subset R$ be 
	two-sided ideals. Then the following conditions are equivalent:
	\begin{enumerate}
	\item[{\rm(a)}] The natural homomorphism $f \colon R \to R/I_1 \times \dots \times R/I_t$ is surjective.
Here the $j$-th component of $f(r)$ is $r \!\pmod{I_j}$.
	\item[{\rm(b)}]
	$I_1,\dots,I_t$ are pairwise coprime, i.e., $I_i+I_j=R$ for any $i\neq j$.
	\end{enumerate}
\end{proposition}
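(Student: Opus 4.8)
The plan is to prove the implication (a) $\Longrightarrow$ (b), since (b) $\Longrightarrow$ (a) is the classical direction (one exhibits, for each $j$, an element $e_j \in R$ with $e_j \equiv 1 \pmod{I_j}$ and $e_j \equiv 0 \pmod{I_k}$ for $k \neq j$, by repeatedly using coprimality to split $1$; this is standard and the authors explicitly say they are only including the reverse implication). So I will assume $f\colon R \to R/I_1 \times \dots \times R/I_t$ is surjective and show $I_i + I_j = R$ whenever $i \neq j$.

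Fix a pair $i \neq j$. The point is that surjectivity of $f$ onto the full product in particular gives surjectivity of $f$ onto the two factors indexed by $i$ and $j$, i.e.\ onto $R/I_i \times R/I_j$ (just compose $f$ with the projection $R/I_1 \times \dots \times R/I_t \to R/I_i \times R/I_j$). So without loss of generality I may take $t = 2$ and show: if $R \to R/I_1 \times R/I_2$ is surjective, then $I_1 + I_2 = R$. Now pick $r \in R$ with $f(r) = (1 + I_1, 0 + I_2)$, meaning $r \equiv 1 \pmod{I_1}$ and $r \equiv 0 \pmod{I_2}$. Then $1 - r \in I_1$ and $r \in I_2$, so $1 = (1 - r) + r \in I_1 + I_2$. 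Since $I_1 + I_2$ is a two-sided ideal containing $1$, it equals $R$. This handles the pair $(i,j)$, and since $i \neq j$ were arbitrary, all the ideals are pairwise coprime.

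There is essentially no obstacle here; the only thing to be slightly careful about is that $R$ is not assumed commutative, so I should phrase everything using two-sided ideals and note that $I_i + I_j$ is again a two-sided ideal, and that a (two-sided, or even left, or even right) ideal containing $1$ is all of $R$. No use is made of semisimplicity, finiteness, or the field; the statement is purely ring-theoretic. I would write this up in a few lines.

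\begin{proof}[Proof of (a) $\Longrightarrow$ (b)]
Fix indices $i \neq j$. Composing $f$ with the projection $R/I_1 \times \dots \times R/I_t \to R/I_i \times R/I_j$ onto the $i$-th and $j$-th factors, we obtain a surjective ring homomorphism $R \to R/I_i \times R/I_j$. In particular, there exists $r \in R$ whose image is $(1 + I_i,\, 0 + I_j)$; that is, $1 - r \in I_i$ and $r \in I_j$. Hence
\[
1 = (1 - r) + r \in I_i + I_j.
\]
Since $I_i + I_j$ is a two-sided ideal of $R$ containing $1$, we conclude that $I_i + I_j = R$. As $i \neq j$ were arbitrary, the ideals $I_1, \dots, I_t$ are pairwise coprime.
\end{proof}
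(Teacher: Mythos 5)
Your proof is correct and is essentially the same argument as the paper's: the paper fixes the pair $(1,2)$ by symmetry and takes $r$ with $f(r)=(1,0,\dots,0)$, while you fix a general pair $(i,j)$ and project onto those two factors, but the key step $1=(1-r)+r\in I_i+I_j$ is identical, and both treatments delegate (b) $\Longrightarrow$ (a) to the standard reference.
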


\begin{proof}
(a) $\Longrightarrow$ (b): By symmetry, it suffices to show that $I_1 + I_2 = R$.
Since $f$ is surjective, there exists an $r \in R$ such that
$f(r) = (1, 0, \dots, 0)$. In particular, $r \in I_2$ by the definition of $f$.
Similarly, $f(1-r) = (0, 1, \dots, 1)$, so $1-r$ lies in $I_1$. Since
$1 = (1- r) + r \in I_1 + I_2$, we conclude that $I_1 + I_2 = R$, as desired.

(b) $\Longrightarrow$ (a): See, e.g., \cite[Proposition~2.2.1]{rowen}.
\end{proof}

In the sequel, $P_g := \F_q[x_1, \dots, x_g]$ and
$R_g:=\F_q\langle X_1, \dots, X_g\rangle$ will denote, respectively, the
commutative polynomial algebra and the 
free associative algebra on $g$ generators over $\F_q$.

\begin{lemma} \label{lem.ideals}
{\rm(a)} A separable algebra $A = \nMat{\F_{q^{n_1}}}{m_1\times m_1} \times \dots \times 
\nMat{\F_{q^{n_r}}}{m_r \times m_r}$ can be generated by $g$ elements over $\F_q$ if and only if
the free associative algebra $R_g$ has $r$ distinct two-sided 
ideals $I_1, \dots, I_r$ such that $R_g/I_i$ is isomorphic to $\nMat{\F_{q^{n_i}}}{m_i\times m_i}$ 
as $\F_q$-algebras for every $i = 1, \dots, r$.

{\rm(b)} An \'etale algebra $E=\F_{q^{n_1}}\times\dots\times \F_{q^{n_r}}$
can be generated by $g$ elements over $\F_q$ if and only if
the polynomial algebra $P_g$ has $r$ distinct ideals $J_1, \dots, J_r$ such that
$P_g/J_i \cong \F_{q^{n_i}}$ as $\F_q$-algebras for every $i = 1, \dots, r$.
\end{lemma}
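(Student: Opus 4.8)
The plan is to prove both parts by exhibiting a natural correspondence between generating sets of $A$ (or $E$) and surjections from the free (commutative) algebra, and then repackaging surjections onto a product via the Chinese Remainder Theorem.

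First I would handle part (a). A choice of $g$ elements $a_1,\dots,a_g$ that generate $A$ over $\F_q$ is the same thing as a surjective $\F_q$-algebra homomorphism $\varphi\colon R_g \to A$, via $X_j \mapsto a_j$; this is just the universal property of the free associative algebra, and surjectivity is exactly the statement that the $a_j$ generate $A$. So $A$ is $g$-generated iff there is a surjection $R_g \twoheadrightarrow A = \nMat{\F_{q^{n_1}}}{m_1\times m_1}\times\dots\times\nMat{\F_{q^{n_r}}}{m_r\times m_r}$. Now compose with the projections onto the factors to get two-sided ideals $I_i := \Ker(R_g \to \nMat{\F_{q^{n_i}}}{m_i\times m_i})$; these satisfy $R_g/I_i \cong \nMat{\F_{q^{n_i}}}{m_i\times m_i}$, and $\bigcap_i I_i = \Ker \varphi$. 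I would argue the $I_i$ are pairwise distinct: if $I_i = I_j$ with $i \neq j$, then the composite $R_g \to A \to \nMat{\F_{q^{n_i}}}{m_i\times m_i}\times\nMat{\F_{q^{n_j}}}{m_j\times m_j}$ has image contained in the diagonal when the two factors are isomorphic, and in any case the surjectivity of $R_g \to R_g/I_i \times R_g/I_j$ forces (by Proposition~\ref{lem.crt}, (a)$\Rightarrow$(b)) that $I_i + I_j = R_g$, which is impossible if $I_i = I_j \neq R_g$. Hence distinctness. Conversely, given $r$ distinct two-sided ideals $I_1,\dots,I_r$ with $R_g/I_i \cong \nMat{\F_{q^{n_i}}}{m_i\times m_i}$, I need them to be pairwise coprime so that Proposition~\ref{lem.crt} gives a surjection $R_g \to \prod_i R_g/I_i \cong A$; this is where the simplicity of matrix algebras enters. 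Each $I_i$ is a maximal two-sided ideal (since $\nMat{\F_{q^{n_i}}}{m_i\times m_i}$ is simple), so for $i \neq j$ the ideal $I_i + I_j$ either equals $I_i$ (forcing $I_j \subseteq I_i$, hence $I_j = I_i$ by maximality, contradicting distinctness) or equals $R_g$. Thus the $I_i$ are pairwise coprime, CRT applies, and $A$ is $g$-generated.

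Part (b) is the special case $m_1 = \dots = m_r = 1$, but with the free associative algebra $R_g$ replaced by the commutative polynomial algebra $P_g$: a generating $g$-tuple of $E$ as an $\F_q$-algebra corresponds to a surjection $P_g \to E$ (the elements of a commutative algebra automatically commute, so the universal property of $P_g$ is the right one), and then the identical argument applies — the kernels $J_i$ of the projections to $\F_{q^{n_i}}$ are maximal ideals of $P_g$ (their quotient is a field), hence pairwise distinct iff pairwise coprime, and CRT converts $r$ such distinct ideals back into a surjection $P_g \to \prod_i P_g/J_i \cong E$.

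The main obstacle, such as it is, is the bookkeeping around distinctness versus coprimality: one must be careful that ``distinct'' is genuinely equivalent to ``pairwise coprime'' here, and this hinges on each $I_i$ (resp.\ $J_i$) being a \emph{maximal} two-sided ideal, which is precisely the simplicity of matrix algebras over a field (resp.\ the fact that a field has no proper nonzero ideals). Once that observation is in place, everything reduces to the universal property of free algebras and the Chinese Remainder Theorem as stated in Proposition~\ref{lem.crt}, so the proof is short. I would also note in passing that the equivalence is stated over $\F_q$ but uses nothing special about finite fields; the finiteness will only matter later when counting ideals.
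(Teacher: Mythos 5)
Your proof is correct and follows essentially the same route as the paper: translate a generating $g$-tuple into a surjection from $R_g$ (resp.\ $P_g$), take kernels of the projections, and use maximality of the $I_i$ (simplicity of the quotients) to identify distinctness with pairwise coprimality so that Proposition~\ref{lem.crt} applies in both directions. The only difference is that you spell out the coprimality/distinctness bookkeeping a bit more explicitly than the paper does; the substance is identical.
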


\begin{proof} 
(a)	Suppose $a_1,\dots,a_g\in A$ generate $A$ over $\F_q$.
	Then the $\F_q$-algebra homomorphism
	$R_g\to A$ sending $X_j$ to $a_j$ ($j = 1, \dots, g$) is surjective.
	Let $I_i$ denote the kernel of the composition $R_g\to A \to \nMat{\F_{q^{n_i}}}{m_i\times m_i}$.
	Then $R_g/I_i\cong \nMat{\F_{q^{n_i}}}{m_i\times m_i}$. Moreover, the ideals $I_1,\dots, I_t$
	are pairwise coprime (and in particular, distinct) by Lemma~\ref{lem.crt}.
	
	Conversely, suppose $I_1,\dots,I_r$ are as above. Then
	$I_1, \dots, I_r$ are maximal and distinct, hence they are pairwise coprime.
	By Lemma~\ref{lem.crt}, the homomorphism $R_g \to \prod_i R_g/I_i$
	is surjective. Since $R_g/I_i\cong \nMat{\F_{q^{n_i}}}{m_i\times m_i}$,	we get an $\F_q$-algebra epimorphism 
	$R_g\to \prod_i  \nMat{\F_{q^{n_i}}}{m_i\times m_i} = A$.
	Hence, $E$ is generated by $g$ elements as an $\F_q$-algebra.
	
Part (b) is proved by the same argument as (a), with the free associative algebra $R_g$ replaced by the commutative
polynomial algebra $P_g$.
\end{proof}

\begin{proposition} \label{prop.pure}
\label{PR:pure-red-separable}
Suppose $A = A_1 \times\dots\times A_t$, where each
factor is a pure separable $\F_q$-algebra, $A_i = \nMat{\F_{q^{n_i}}}{m_i\times m_i}^{r_i}$.
Assume further that the pairs $(m_i,n_i)$ are distinct for $i = 1, \dots, t$.  Then
$\gen(A) = \max \{ \gen(A_1), \dots, \gen(A_t) \}$.
\end{proposition}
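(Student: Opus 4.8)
The plan is to prove the two inequalities $\gen(A) \leqslant \max_i \gen(A_i)$ and $\gen(A) \geqslant \gen(A_i)$ for each $i$ separately, using the ideal-theoretic characterization of Lemma~\ref{lem.ideals}(a).

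For the inequality $\gen(A) \geqslant \gen(A_i)$, observe that any surjection $R_g \twoheadrightarrow A$ composes with the projection $A \twoheadrightarrow A_i$ to give a surjection $R_g \twoheadrightarrow A_i$; hence $g$ generators for $A$ yield $g$ generators for each $A_i$, so $\gen(A) \geqslant \gen(A_i)$ for all $i$, and therefore $\gen(A) \geqslant \max_i \gen(A_i)$.

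For the reverse inequality, set $g := \max_i \gen(A_i)$. For each $i$, since $A_i = \nMat{\F_{q^{n_i}}}{m_i \times m_i}^{r_i}$ is generated by $g$ elements, Lemma~\ref{lem.ideals}(a) provides $r_i$ distinct two-sided ideals of $R_g$ with quotient isomorphic to $\nMat{\F_{q^{n_i}}}{m_i \times m_i}$. Taking the union over all $i$ gives a collection of $r := r_1 + \dots + r_t$ two-sided ideals of $R_g$; the ideals within a single block $A_i$ are distinct, and ideals coming from different blocks $A_i$, $A_j$ are automatically distinct because the quotients $\nMat{\F_{q^{n_i}}}{m_i \times m_i}$ and $\nMat{\F_{q^{n_j}}}{m_j \times m_j}$ are non-isomorphic $\F_q$-algebras — this is exactly where the hypothesis that the pairs $(m_i, n_i)$ are distinct is used. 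So $R_g$ has $r$ distinct two-sided ideals with the required quotients, and Lemma~\ref{lem.ideals}(a) (applied in the other direction) shows $A$ is generated by $g$ elements, i.e. $\gen(A) \leqslant g = \max_i \gen(A_i)$.

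Combining the two inequalities gives $\gen(A) = \max \{\gen(A_1), \dots, \gen(A_t)\}$. The only genuinely delicate point is the bookkeeping that ensures the $r$ ideals one assembles are pairwise distinct; everything hinges on the observation that $\nMat{\F_{q^a}}{c \times c} \cong \nMat{\F_{q^b}}{d \times d}$ as $\F_q$-algebras forces $(a,c) = (b,d)$ (for instance by comparing centers and $\F_q$-dimensions), so the distinctness hypothesis on the pairs $(m_i,n_i)$ is exactly what is needed. I expect this comparison-of-invariants step to be the main, and essentially the only, obstacle; the rest is a direct application of Lemma~\ref{lem.ideals}.
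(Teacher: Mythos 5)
Your argument is correct and is essentially identical to the paper's proof: both directions are handled the same way, with the lower bound coming from projections and the upper bound from assembling the $r_1+\dots+r_t$ ideals supplied by Lemma~\ref{lem.ideals}(a) and checking they are pairwise distinct because non-isomorphic quotients (distinct pairs $(m_i,n_i)$) force ideals from different blocks to differ. Your explicit remark that $\nMat{\F_{q^a}}{c\times c}\cong\nMat{\F_{q^b}}{d\times d}$ implies $(a,c)=(b,d)$ by comparing centers and dimensions just spells out what the paper leaves implicit.
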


 \begin{proof} Let $g = \max \{\gen(A_i) \, |\, i = 1,\dots, t\}$.
 Clearly $\gen(A) \geqslant \gen(A_i)$ for each $i$, and thus 
 $\gen(A) \geqslant g$.
 
 To prove the opposite inequality, note that
 by Lemma~\ref{lem.ideals} there exist $r_i$ distinct two-sided 
 ideals $I_{i, 1}, I_{i, 2}, \ldots, I_{i, r_i}$
 such that 
 \begin{equation} R_g/I_{i, j} \cong \nMat{\F_{q^{n_i}}}{m_i\times m_i}
 \label{e.quotient} \end{equation}
 for each $j = 1, 2, \ldots, r_i$.
 Letting $i$ vary from $1$ to $t$, we obtain $r_1 + \dots + r_t$ ideals,
 $I_{i, j}$. We claim that these ideals are distinct. If we can prove this claim,
 then Lemma~\ref{lem.ideals} will tell us that $A$ is generated by $g$ elements, and the proof
 of Proposition~\ref{prop.pure} will be complete.
 
 To prove the claim, suppose $I_{i, j} = I_{i', j'}$ for some 
 $i,j,i',j'$. Then $i = i'$ by \eqref{e.quotient}, and 
 $j = j'$ because the ideals $I_{i, 1}, I_{i, 2}, \dots, I_{i, r_i}$ 
 were chosen to be distinct. This proves the claim.
 \end{proof}
 
\section{Proof of Theorem~\ref{thm.main1}} 
\label{sec:etale}

\begin{defn} \label{def.Nqn(g)}
In the sequel, $N_{q, n}(g)$ will denote the number of maximal ideals $I$ in the polynomial ring $P_g := \F_q[x_1, \dots, x_g]$
such that $P_g/I \cong \F_{q^n}$.
\end{defn}

We will often fix $q$ and $n$, and treat $N_{q, n}(g)$ as a function $g$.
The symbol $N_{q, n}(g)$ emphasizes this point of view.

Let $E = (\F_{q^n})^r$ be a pure \'etale algebra over $\F_q$. 
By Lemma~\ref{lem.ideals}(b), $\gen(E)$ is the minimal integer $g$ such that 
$r \leqslant N_{q,n}(g)$. Thus in order to prove Theorem~\ref{thm.main1},
it suffices to establish the following formula for $N_{q,n}(g)$. 

\begin{proposition} \label{prop.Mobius}
$N_{q,n}(g) = \dfrac{1}{n} \, {\Large \sum\limits_{d\mid n}} \; \mu(d)q^{\frac{gn}{d}}$.
\end{proposition}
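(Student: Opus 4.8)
The plan is to count maximal ideals $I$ of $P_g = \F_q[x_1,\dots,x_g]$ with $P_g/I \cong \F_{q^n}$ by counting the $\F_q$-algebra homomorphisms that produce them. First I would observe that for each such maximal ideal $I$, an isomorphism $P_g/I \xrightarrow{\sim} \F_{q^n}$ composed with the quotient map gives a surjective $\F_q$-algebra homomorphism $\varphi\colon P_g \to \F_{q^n}$ with kernel $I$; conversely any surjective homomorphism $P_g \to \F_{q^n}$ has a maximal kernel with the right residue field. So I would count surjections $P_g \to \F_{q^n}$ and then account for the overcounting, namely that two surjections have the same kernel if and only if they differ by an automorphism of $\F_{q^n}$. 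Since $\F_{q^n}/\F_q$ is Galois with $\Gal(\F_{q^n}/\F_q)$ cyclic of order $n$, and that Galois group acts freely on the set of surjections (a surjection is determined by where the $x_i$ go, and a nontrivial automorphism cannot fix a generating set), we get $N_{q,n}(g) = \frac{1}{n}\,\#\{\text{surjective }\F_q\text{-algebra homs }P_g \to \F_{q^n}\}$.

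Next I would identify $\F_q$-algebra homomorphisms $P_g \to \F_{q^n}$ with points: such a homomorphism is just a choice of $g$-tuple $(a_1,\dots,a_g) \in \F_{q^n}^g$ (the images of $x_1,\dots,x_g$), so there are $q^{gn}$ of them in total. The homomorphism corresponding to $(a_1,\dots,a_g)$ is surjective precisely when $a_1,\dots,a_g$ generate $\F_{q^n}$ over $\F_q$, which happens exactly when the subfield $\F_q(a_1,\dots,a_g)$ equals $\F_{q^n}$, i.e.\ when no proper subfield contains all the $a_i$. Grouping all $q^{gn}$ tuples according to the subfield $\F_{q^d}$ they generate — where $d$ ranges over divisors of $n$ — gives $\sum_{d \mid n} S(d) = q^{gn}$, where $S(d)$ denotes the number of $g$-tuples generating exactly $\F_{q^d}$. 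Equivalently, the number of $g$-tuples lying in $\F_{q^d}$ (generating some subfield of it) is $q^{gd} = \sum_{e \mid d} S(e)$.

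Then I would apply M\"obius inversion to the relation $q^{gd} = \sum_{e\mid d} S(e)$ to solve for $S(n)$: this yields $S(n) = \sum_{d \mid n} \mu(d)\, q^{g n/d}$. Combining with the first paragraph, $N_{q,n}(g) = \frac{1}{n} S(n) = \frac{1}{n}\sum_{d\mid n}\mu(d)\,q^{gn/d}$, which is exactly the claimed formula.

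I expect the only real subtlety to be the freeness of the Galois action on surjections, i.e.\ checking that a nonidentity element $\sigma \in \Gal(\F_{q^n}/\F_q)$ cannot satisfy $\sigma \circ \varphi = \varphi$ for a surjective $\varphi$: if it did, then $\sigma$ would fix every element of the image of $\varphi$, but the image is all of $\F_{q^n}$, forcing $\sigma = \mathrm{id}$. Everything else is bookkeeping: the bijection between surjections $P_g \to \F_{q^n}$ and generating $g$-tuples, the partition of $\F_{q^n}^g$ by the generated subfield, and a standard M\"obius inversion over the divisor lattice of $n$.
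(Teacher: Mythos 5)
Your proposal is correct and takes essentially the same route as the paper: the identification of ideals with Galois classes of surjections $P_g \to \F_{q^n}$, hence with generating $g$-tuples up to the (free) $\Gal(\F_{q^n}/\F_q)$-action, is exactly the paper's Lemma~\ref{lem.ideal-count}, and the paper likewise partitions tuples according to the subfield $\F_{q^d}$ they generate to get $q^{gd}=\sum_{e\mid d} e\,N_{q,e}(g)$ and then applies M\"obius inversion. The only cosmetic difference is that you invert for $S(n)=n\,N_{q,n}(g)$ and divide by $n$ at the end, whereas the paper carries the orbit-size factor $d$ throughout.
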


Here  $\mu$ denotes the M\"obius function. Recall that $\mu: \N \to \{-1, 0, 1\}$ is defined
as follows: $\mu(m) = (-1)^j$, if $m$ is the product of $j \geqslant 0$ distinct primes, and
$\mu(m) = 0$, if $m$ is divisible by $p^2$ for some prime $p$.

When $g = 1$, ideals $I$ of $P_1=\F_q[x]$ such that $\F_q[x]/I \cong \F_{q^n}$
are in bijection with monic irreducible polynomials of degree
$n$ in $\F_q[x]$. In this case, Proposition~\ref{prop.Mobius} reduces to the well-known
formula for the number of such polynomilas. The proof of this well-known formula
relies on M\"obius inversion; see, e.g.,~\cite[Section 3.2]{ln} or
\cite[p.~254]{lang}. Our proof of Proposition~\ref{prop.Mobius} proceeds 
along similar lines.

\begin{lemma} \label{lem.ideal-count} 
Let $P_g = \F_q[x_1, \ldots, x_g]$.
The following three sets are in (pairwise) bijective correspondence. In particular, each of these sets has 
cardinality $N_{q,n}(g)$.
\begin{enumerate}
	\item[{\rm(a)}]
	The set of ideals $I \subset P_g$ such that $P_g/I \cong \F_{q^n}$, 
	\item[{\rm(b)}] The set of orbits of
	$\F_q$-algebra epimorphisms $\phi:P_g\to \F_{q^n}$ under the
	action of $\Gal(\F_{q^n}/\F_q)$ given by $\sigma \colon \phi \mapsto \sigma \circ \phi$,
	for any $\sigma \in \Gal(\F_{q^n}/\F_q)$.
	\item[{\rm(c)}] 
    The set of $\Gal(\F_{q^n}/\F_q)$-orbits of order $n$ in $(\F_{q^n})^g$ or equivalently, the set of
    $g$-tuples $(a_1, \dots, a_g) \in (\F_{q^n})^g$ such that $\F_q[a_1, \dots, a_g] = \F_{q^n}$.
\end{enumerate}
\end{lemma}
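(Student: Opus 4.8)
The plan is to establish the three bijections in Lemma~\ref{lem.ideal-count} by moving through the sets in the order (b) $\leftrightarrow$ (a) and (b) $\leftrightarrow$ (c), using the observation that any $\F_q$-algebra epimorphism $\phi \colon P_g \to \F_{q^n}$ is determined by the tuple $(\phi(x_1), \dots, \phi(x_g)) \in (\F_{q^n})^g$, and conversely every tuple $(a_1, \dots, a_g)$ determines a unique $\F_q$-algebra homomorphism $P_g \to \F_{q^n}$ sending $x_j \mapsto a_j$, which is surjective precisely when $\F_q[a_1, \dots, a_g] = \F_{q^n}$.

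For the correspondence between (a) and (b), I would send an epimorphism $\phi$ to its kernel $\Ker(\phi)$, which is a maximal ideal $I$ with $P_g/I \cong \F_{q^n}$. This map is constant on $\Gal(\F_{q^n}/\F_q)$-orbits, since $\sigma \circ \phi$ has the same kernel as $\phi$ when $\sigma$ is an automorphism. Conversely, given an ideal $I$ with $P_g/I \cong \F_{q^n}$, fix an isomorphism $P_g/I \xrightarrow{\sim} \F_{q^n}$ and compose with the quotient map to get an epimorphism; any two such isomorphisms differ by an $\F_q$-automorphism of $\F_{q^n}$, i.e.\ an element of $\Gal(\F_{q^n}/\F_q)$, so the resulting epimorphism is well-defined up to the Galois action. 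These two assignments are mutually inverse, giving a bijection between (a) and the orbit set in (b).

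For the correspondence between (b) and (c), I would use the identification of epimorphisms with generating tuples described above: $\phi \mapsto (\phi(x_1), \dots, \phi(x_g))$ is a bijection from the set of epimorphisms $P_g \to \F_{q^n}$ onto the set of tuples $(a_1, \dots, a_g)$ with $\F_q[a_1, \dots, a_g] = \F_{q^n}$. This bijection is $\Gal(\F_{q^n}/\F_q)$-equivariant, since $(\sigma \circ \phi)(x_j) = \sigma(\phi(x_j))$, so it descends to a bijection of orbit sets. Finally, I should note that a tuple $(a_1, \dots, a_g)$ satisfies $\F_q[a_1, \dots, a_g] = \F_{q^n}$ if and only if its $\Gal(\F_{q^n}/\F_q)$-orbit has size exactly $n$: indeed, the stabilizer of the tuple is $\Gal(\F_{q^n}/\F_q[a_1,\dots,a_g])$, so the orbit has size $[\F_q[a_1,\dots,a_g] : \F_q]$, which equals $n = [\F_{q^n}:\F_q]$ precisely when $\F_q[a_1,\dots,a_g] = \F_{q^n}$. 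This justifies the "or equivalently" phrasing in (c).

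None of the individual verifications is difficult; the only point requiring a little care is checking well-definedness of the map from (a) to (b), namely that the choice of isomorphism $P_g/I \cong \F_{q^n}$ only affects the epimorphism through the Galois action — this is where the rigidity of finite fields (every $\F_q$-algebra automorphism of $\F_{q^n}$ is Galois) is used. Once Lemma~\ref{lem.ideal-count} is in place, Proposition~\ref{prop.Mobius} will follow by counting the set in (c): the total number of tuples in $(\F_{q^n})^g$ is $q^{gn}$, each decomposes according to the subfield it generates, and a M\"obius inversion over the divisor lattice of $n$ isolates the count of tuples generating all of $\F_{q^n}$, which after dividing by the orbit size $n$ yields the stated formula.
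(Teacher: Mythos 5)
Your proposal is correct and follows essentially the same route as the paper: kernels and choices of isomorphism (modulo the Galois action) give (a)$\leftrightarrow$(b), evaluation $\phi \mapsto (\phi(x_1),\dots,\phi(x_g))$ gives (b)$\leftrightarrow$(c), and the orbit-size-$n$ criterion is justified via the Galois correspondence. Your extra remarks on well-definedness and the stabilizer computation simply make explicit what the paper leaves to the reader.
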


\begin{proof} 
	The bijective correspondence between the sets (a) and (b) is given by sending
	the $\Gal(\F_{q^n}/\F_q)$-orbit of $\phi:P_g\to \F_{q^n}$
	to $\ker (\phi)$.
	In the other direction, send an ideal $I \subset P_g$ in (a) to the $\Gal(\F_{q^n}/\F_q)$-orbit
	of the composition $\phi \colon P_g\to P_g/I\xrightarrow{\psi} \F_{q^n}$, where
	$\psi$ is an $\F_q$-algebra isomorphism $P_g/I\to \F_{q^n}$. (Here $\phi$ depends on the choice of the isomorphism
	$\psi$, but the $\Gal(\F_{q^n}/\F_q)$-orbit of $\phi$ does not).
	One easily checks that these maps are mutually inverse.
	
	A bijective correspondence between (b) and (c) is given by
	$\phi \mapsto (\phi(x_1), \dots, \phi(x_g))$. Note that by the Galois correspondence
	$(a_1,\dots,a_g)\in (\F_{q^n})^g$
	has an orbit of order $n$ if and only if $\F_q[a_1, \dots, a_g] = \F_{q^n}$.
\end{proof}

\begin{proof}[Proof of Proposition~\ref{prop.Mobius}] 
   Consider the natural (diagonal) action of $\textrm{Gal}(\F_{q^n}/\F_q)$ 
    on $(\F_{q^{n}})^g$. Let $d$ be a divisor of $n$. The group $\Gal(\F_{q^n}/\F_q)$ is cyclic
    of order $n$; its unique subgroup of index $d$ is $\Gal(\F_{q^{n}}/\F_{q^d})$.
    The  elements of $(\F_{q^n})^g$ invariant under the action of this subgroup are precisely
    the elements of $(\F_{q^{d}})^g$. Thus by Lemma \ref{lem.ideal-count}, there are $N_{g,d}(q)$ 
    $\Gal(\F_{q^n}/\F_q)$-orbits of size $d$ in $(\F_{q^n})^g$.
    Since the $\Gal(\F_{q^n}/\F_q)$-orbits partition $(\F_{q^n})^g$,
    we have
    \[q^{gn} = \sum_{d\mid n} dN_{q,d}(g) .\]
    The M\"obius inversion formula (see, e.g., \cite[Theorem 3.24]{ln}),
now yields \[ nN_{q,n}(g)  = \sum_{d\mid n} \mu(d)q^{\frac{gn}{d}} \, , \] as claimed.    
\end{proof}

\begin{corollary} \label{cor.n=1} Let $E = \F_{q} \times \dots \times \F_{q}$ ($r$ times). Then
$\gen(E) =\lceil \log_qr\rceil$.
\end{corollary}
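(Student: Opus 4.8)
The plan is to derive Corollary~\ref{cor.n=1} directly from Theorem~\ref{thm.main1} (equivalently, from Proposition~\ref{prop.Mobius} combined with Lemma~\ref{lem.ideals}(b)) by specializing to $n = 1$. First I would observe that when $n = 1$ the only divisor $d$ of $n$ is $d = 1$, so the M\"obius sum in Proposition~\ref{prop.Mobius} collapses to a single term: $N_{q,1}(g) = \mu(1) q^{g} = q^{g}$. (One can also see this bare-hands: the ideals $I \subset P_g$ with $P_g/I \cong \F_q$ are exactly the maximal ideals $(x_1 - a_1, \dots, x_g - a_g)$ with $(a_1, \dots, a_g) \in \F_q^g$, of which there are $q^g$; this matches set (c) of Lemma~\ref{lem.ideal-count}, since every $\Gal(\F_q/\F_q)$-orbit is a singleton.)

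Next I would invoke Lemma~\ref{lem.ideals}(b), which identifies $\gen(E)$ for the pure \'etale algebra $E = \F_q^{\,r}$ as the least non-negative integer $g$ with $r \leqslant N_{q,1}(g) = q^{g}$. Taking logarithms base $q$, the condition $r \leqslant q^g$ is equivalent to $\log_q r \leqslant g$, and since $g$ is required to be an integer the smallest such $g$ is exactly $\lceil \log_q r \rceil$. Hence $\gen(E) = \lceil \log_q r \rceil$, as claimed.

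I do not anticipate a genuine obstacle here: this is a one-line specialization of the main theorem once the $n=1$ case of the M\"obius sum is noted. The only point deserving a word of care is the boundary behavior of the ceiling — namely that for $r$ a power of $q$, say $r = q^k$, one gets $g = k$ with equality $r = q^g$, which is consistent with "minimal $g$ such that $r \leqslant q^g$" and with $\lceil \log_q r \rceil = k$; and that for $r = 1$ one gets $\gen(E) = 0$, which is correct since $\F_q$ itself (the $r=1$ case) is generated by the empty set over $\F_q$. These remarks also explain the assertion made just before the corollary in the surrounding text that $I_1(g) = \emptyset$ for all $g$ when $(n,m) = (1,1)$: the formula $\gen(E) = \lceil \log_q r\rceil$ always realizes the lower bound of Theorem~\ref{thm.main2}, never the lower bound plus one.
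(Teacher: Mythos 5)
Your proposal is correct and follows the same route as the paper: specialize the M\"obius sum of Theorem~\ref{thm.main1} to $n=1$, where it collapses to the single term $q^g$, so that $\gen(E)$ is the least $g$ with $r\leqslant q^g$, i.e.\ $\lceil\log_q r\rceil$. The extra remarks (the explicit maximal ideals of $P_g$ and the boundary cases) are accurate but not needed.
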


\begin{proof}
For $n=1$, the sum in Theorem~\ref{thm.main1} reduces to just one term, $q^g$. That is,
		$\gen(E)$ is the smallest integer $g$ such that $r\leqslant q^g$. Equivalently, $\gen(E) =\lceil \log_qr\rceil$.
\end{proof}

\section{Proof of Theorem~\ref{thm.main2}}
\label{sec.explicit}

We shall need the following estimates on $N_{q, n}(g)$.

\begin{lemma}\label{lem.bounds-commutative}
    {\rm (a)}	$N_{q,n}(g)\leqslant \dfrac{1}{n}q^{gn}$ for any $n, g \geqslant 1$.


\smallskip
    {\rm (b)}	$N_{q, n}(g) \geqslant \dfrac{1}{n}q^{gn}(1-\dfrac{1}{q})$ for any $n, g \geqslant 1$.
\end{lemma}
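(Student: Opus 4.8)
The plan is to estimate $N_{q,n}(g)$ using the exact formula from Proposition~\ref{prop.Mobius}, namely
\[ N_{q,n}(g) = \frac{1}{n}\sum_{d\mid n}\mu(d)q^{\frac{gn}{d}} = \frac{1}{n}\Bigl(q^{gn} + \sum_{\substack{d\mid n\\ d>1}}\mu(d)q^{\frac{gn}{d}}\Bigr), \]
so that both bounds reduce to controlling the size of the ``error'' sum over the proper divisors $d>1$ of $n$. For part (a) I would simply drop all the terms with $d>1$: since the leading term $q^{gn}$ is the largest, I first argue that the sum of the remaining terms is negative when it is nonzero, or more robustly bound $\sum_{d\mid n, d>1}\mu(d)q^{gn/d}$ from above by its positive part, which is dominated by $q^{gn/2}< q^{gn}$; combining gives $N_{q,n}(g)\le\frac1n q^{gn}$. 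Actually the cleanest route for (a) is to note that $N_{q,n}(g)$ counts $\Gal(\F_{q^n}/\F_q)$-orbits of size exactly $n$ in $(\F_{q^n})^g$ (Lemma~\ref{lem.ideal-count}(c)); each such orbit has exactly $n$ elements, and these orbits are disjoint subsets of $(\F_{q^n})^g$, which has $q^{gn}$ elements, so $n\cdot N_{q,n}(g)\le q^{gn}$ immediately. I expect to present this combinatorial argument since it is shorter and requires no estimation.

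For part (b) the combinatorial picture is also the right one: $n\cdot N_{q,n}(g)$ equals $q^{gn}$ minus the number of points of $(\F_{q^n})^g$ lying in some proper subfield power $(\F_{q^d})^g$ with $d\mid n$, $d<n$. Every such point lies in $(\F_{q^{d}})^g$ for some maximal proper divisor, hence in $(\F_{q^{n/p}})^g$ for some prime $p\mid n$. Thus the number of ``bad'' points is at most $\sum_{p\mid n}q^{gn/p}$, and in the crudest form at most (number of prime divisors of $n$) $\cdot\, q^{gn/2}$. To get exactly the stated bound $\frac1n q^{gn}(1-\frac1q)$ I would instead use the sharper observation that it suffices to bound the bad points by $q^{gn/2}\cdot(\text{something})$; but to land precisely on the factor $(1-\frac1q)$, the neatest approach is: the bad points number at most $q^{g\cdot n/2}\cdot$ (number of divisors), and since $g\ge 1$ and $n/2 \le n-1$... let me instead bound more carefully. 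The number of bad points is at most $\sum_{d\mid n,\, d<n} q^{gd} \le q^{g(n-1)} + q^{g(n-1)} + \dots$; the largest proper-divisor exponent is $gn/2\le g(n-1)$ when $n\ge 2$, and the number of proper divisors is at most... This is where I would do the bookkeeping carefully, but the target $\frac{q^{gn}}{q}$ for the error is generous: we want $q^{gn}-(\text{bad}) \ge q^{gn}(1-\frac1q)$, i.e.\ $(\text{bad})\le q^{gn-1}$. Since every bad point sits in some $(\F_{q^{n/p}})^g$ and $n/p\le n/2$, and there are fewer than $n$ primes dividing $n$, we get $(\text{bad}) < n\cdot q^{gn/2}$; it then remains to check $n\cdot q^{gn/2}\le q^{gn-1}$, i.e.\ $n\le q^{gn/2-1}$, which holds for all $n\ge 2$, $g\ge 1$, $q\ge 2$ (and is trivial for $n=1$ where there are no bad points and the bound reads $N_{q,1}(g)=q^g\ge q^g(1-\frac1q)$).

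The main obstacle I anticipate is not any deep idea but pinning down the inequality $n\cdot q^{gn/2}\le q^{gn-1}$ uniformly, and in particular handling the small cases $n=2$ (where $q^{gn/2-1}=q^{g-1}$ must be $\ge 2$, true for $g\ge1,q\ge2$ except it is exactly $1$ when $g=1,q=2$ — so the crude count ``$n$ primes'' is wasteful and I should instead use that $n=2$ has only one prime divisor, giving $(\text{bad})=q^g\le q^{2g-1}$, which holds for $g\ge 1$). So in the writeup I would split: let $p$ be the smallest prime dividing $n$; every bad point lies in $\bigcup_{p'\mid n\text{ prime}}(\F_{q^{n/p'}})^g$, whose size is at most $\sum_{p'} q^{gn/p'}$, and I bound this sum termwise against $q^{gn-1}$, using $\omega(n)$ (the number of distinct prime factors) together with $q^{gn/p'}\le q^{gn/2}$, and verify $\omega(n)\cdot q^{gn/2}\le q^{gn-1}$ directly; this is elementary since $\omega(n)\le \log_2 n$ and $gn/2 \le gn-1-\log_2 n$ whenever $n\ge 2$ (and equality/edge cases are checked by hand). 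With the bad-point count so bounded, part (b) follows, and the proof is complete.

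\begin{proof}
By Lemma~\ref{lem.ideal-count}(c), $N_{q,n}(g)$ equals the number of $\Gal(\F_{q^n}/\F_q)$-orbits of size exactly $n$ in $(\F_{q^n})^g$. Since these orbits are pairwise disjoint subsets of $(\F_{q^n})^g$, each of cardinality $n$, we obtain $n\,N_{q,n}(g)\leqslant q^{gn}$, which is (a).

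For (b), an element $a=(a_1,\dots,a_g)\in(\F_{q^n})^g$ has orbit of size $<n$ precisely when $\F_q[a_1,\dots,a_g]$ is a proper subfield of $\F_{q^n}$, hence is contained in $\F_{q^{n/p}}$ for some prime $p\mid n$; equivalently $a\in(\F_{q^{n/p}})^g$. Therefore the number of such ``bad'' $a$ is at most
\[ \sum_{\substack{p\mid n\\ p\ \mathrm{prime}}} q^{gn/p} \leqslant \omega(n)\,q^{gn/2}, \]
where $\omega(n)$ is the number of distinct prime divisors of $n$. Since $\omega(n)\leqslant \log_2 n$ and $g\geqslant 1$, one checks directly that $\omega(n)\,q^{gn/2}\leqslant q^{gn-1}$ for every $n\geqslant 1$ and every prime power $q$ (the inequality is vacuous for $n=1$, since then there are no bad elements). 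Consequently
\[ n\,N_{q,n}(g) = q^{gn} - \#\{\text{bad }a\} \geqslant q^{gn} - q^{gn-1} = q^{gn}\Bigl(1-\tfrac{1}{q}\Bigr), \]
which is (b).
\end{proof}
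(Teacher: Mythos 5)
Your part (a) coincides with the paper's proof (counting disjoint orbits of size $n$ inside $(\F_{q^n})^g$). For part (b) you take a genuinely different route. The paper plugs into the exact M\"obius formula of Proposition~\ref{prop.Mobius} and argues in three cases: $n=1$, $n$ a prime power (where the sum has two terms), and $n$ with at least two prime factors, where it bounds the number of negatively-signed terms by $\tau(n)/2\leqslant\sqrt{n}$ and reduces to $2^{n}\geqslant 4n$ for $n\geqslant 6$. You avoid the formula altogether: $n\,N_{q,n}(g)$ is the number of generating $g$-tuples, and you bound its complement by the union of the subfield tuple sets $(\F_{q^{n/p}})^g$ over primes $p\mid n$, so the error is at most $\omega(n)\,q^{gn/2}$. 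This is more self-contained (it uses only Lemma~\ref{lem.ideal-count}, not Proposition~\ref{prop.Mobius}), treats all $n$ uniformly without a prime-power case split, and your constant $\omega(n)$ is smaller than the paper's $2^{\omega(n)-1}\leqslant \sqrt{n}$, so the final numerical inequality is milder.

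One small caveat in the write-up: the justification ``since $\omega(n)\leqslant\log_2 n$ \dots one checks directly'' is not quite enough as stated, because the bound through $\log_2 n$ fails at $n=3$, $g=1$, $q=2$ (there $\log_2 3\cdot 2^{3/2}>4=q^{gn-1}$), and is only an equality at $n=2$, $g=1$, $q=2$. The inequality $\omega(n)\,q^{gn/2}\leqslant q^{gn-1}$ you need is nevertheless true for all $n\geqslant 2$, $g\geqslant 1$, $q\geqslant 2$; the clean check is to split according to $\omega(n)$: if $\omega(n)\leqslant 1$ the claim reads $q^{gn/2-1}\geqslant 1$, which holds because $gn\geqslant 2$, while if $\omega(n)\geqslant 2$ then $n\geqslant 6$ and $q^{gn/2-1}\geqslant 2^{n/2-1}\geqslant \log_2 n\geqslant \omega(n)$. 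With that adjustment your argument is complete (and it is sharp: at $n=2$, $g=1$, $q=2$ both sides of (b) equal $1$).
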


\begin{proof}
    (a)	is an immediate consequence of Lemma~\ref{lem.ideal-count}, since the number of orbits of order $n$ in
    $(\F_{q^n})^g$ cannot exceed $\dfrac{1}{n}|(\F_{q^n})|^g$.
%
%
To prove (b) let us consider three cases. 
	
\smallskip	
	
{\it  Case	1.}  $n=1$. By Proposition~\ref{prop.Mobius}, $N_{q,n}(g)=\dfrac{1}{n} q^{g}$, and part (b) follows.

\smallskip
	
{\it  Case 2.} $n= p^e$ is a prime power, where $e \geqslant 1$. By Proposition~\ref{prop.Mobius}, 
	\begin{equation} \label{e.prime-power}
	N_{q,n}(g) ={\textstyle\dfrac{1}{n}(q^{g p^e}- q^{g p^{e-1}})= {\textstyle\dfrac{1}{n}}q^{gn}(1-q^{g(p^{e-1} -p^e)})} \, . 
	\end{equation}
	Since $p^{e-1}- p^e \leqslant -1$, part (b) follows.
   
\smallskip
 
{ \it Case 3.}	 The prime decomposition of $n$ is $n = p_1^{e_1} \dots p_m^{e_m}$,
	where $m \geqslant 2$.	In particular, $n \geqslant 6$. Let $\tau(n) = (e_1 + 1) \dots (e_m + 1)$  
	be the number of positive divisors of $n$. As $d$ ranges over these divisors, 
	the function $\mu(d)$ attains each of the values  $1$  and $-1$ exactly $2^{m-1}$ times, and in
	all other cases $\mu(d) = 0$.
	We conclude that $\mu(d) = -1$ for at most $\tau(n)/2$ divisors $d$. In other words, in the expression
	for $N_{q, n}(g)$ given by Proposition~\ref{prop.Mobius}, at most $\tau(n)/2$ terms $q^{gn/d}$ come with a negative sign.
	Since the absolute value of each of these terms is at most $q^{gn/2}$ and since $\tau(n) \leqslant 2\sqrt{n}$, 
	we see that
	\begin{equation} \label{EQ:lower-bound-M-g-n}
	N_{q,n}(g) \geqslant 
	\dfrac{1}{n}\left(q^{gn}-\sqrt{n} \, q^{\frac{gn}{2}}\right)=
	\dfrac{1}{n}q^{gn}\left(1-\sqrt{n} \, q^{-\frac{gn}{2}}\right)  \ .
	\end{equation}
	It is therefore enough to show that $\sqrt{n} \, q^{-gn/2}\leqslant \dfrac{1}{q}$,
		or equivalently, that $\sqrt{n} \, q^{1-gn/2}\leqslant 1$.
		Since $n \geqslant 6$, we have $1-\dfrac{gn}{2}<0$.	Thus,
        if the inequality $\sqrt{n} \, q^{1-gn/2}\leqslant 1$ 
        holds with $g=1$ and $q=2$, then it will hold for all $g$ and $q$. 
        Substituting $g=1$ and $q=2$, we obtain $\sqrt{n} \, 2^{1-n/2}\leqslant 1$ or equivalently,
        $2^n \geqslant 4n$. An easy induction argument shows that this inequality is satisfied
        for every $n\geqslant 4$.
\end{proof}

	\begin{proof}[Proof of Theorem~\ref{thm.main2}]
        Set $E = (\F_{q^n})^r$ and $g=\textrm{gen}(E)$. We need to  show that
		\begin{equation} \label{e.1.3a}
		{\textstyle\dfrac{1}{n}} \log_q(nr)\leqslant  g < {\textstyle\dfrac{1}{n}} \log_q(nr)+2\ .
		\end{equation}
		If $g=0$, then necessarily $r=n=1$ and the theorem holds.
		If $g=1$, then Lemmas~\ref{lem.ideals}  and~\ref{lem.bounds-commutative}(a)
		imply   $r\leqslant N_{q,n}(1) \leqslant \dfrac{1}{n}q^n$. This yields  
		$\lceil \textstyle\dfrac{1}{n} \log_q(nr)\rceil\leqslant 1$, and once again, the inequalities~\eqref{e.1.3a}
		hold. 
		
	    Thus we may assume that $g\geqslant 2$. By Lemma~\ref{lem.ideals}, $g$ is the unique integer for which
		\begin{equation*} 
            N_{q,n}(g-1) <r \leqslant N_{q,n}(g) \ .
		\end{equation*}
		By Lemma~\ref{lem.bounds-commutative}(b), this implies
		$
		\dfrac{1}{n}q^{(g-1)n}(1-\dfrac{1}{q})<r\leqslant \dfrac{1}{n}q^{gn}
		$ 
		which, after rearranging, yields
		\[{\textstyle\dfrac{1}{n}}\log_q(rn)\leqslant g 
		<{\textstyle\dfrac{1}{n}}\log_q(rn)+1+{\textstyle\dfrac{1}{n}}(1-\log_q(q-1)).
		\]
		Since $q\geqslant 2$ and $n\geqslant 1$, the right hand side cannot exceed 
		$\dfrac{1}{n} \log_q(nr)+2$.
\end{proof}

	The following corollary was stated in \cite[Remark~4.3]{fr} without proof.
	
	\begin{corollary} \label{cor.fr}
		Let $E$ be an \'etale $\F_q$-algebra. If $d=\dim_{\F_q}(E)$,
		then $\gen (E) \leqslant \lceil\log_q (d) \rceil$. 
	\end{corollary}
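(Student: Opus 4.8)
The plan is to reduce Corollary~\ref{cor.fr} to the pure case and then apply Theorem~\ref{thm.main2}. First I would write $E = E_1 \times \dots \times E_t$ as a product of pure \'etale algebras, with $E_i \cong (\F_{q^{n_i}})^{r_i}$ and the $n_i$ distinct. By Proposition~\ref{prop.pure}, $\gen(E) = \max_i \gen(E_i)$, so it suffices to bound each $\gen(E_i)$ by $\lceil \log_q(d) \rceil$, where $d = \dim_{\F_q}(E) = \sum_i n_i r_i$. Since $n_i r_i \leqslant d$ for each $i$, it is in fact enough to prove the statement for a single pure algebra $E = (\F_{q^n})^r$: if I can show $\gen((\F_{q^n})^r) \leqslant \lceil \log_q(nr) \rceil$, then applying this to $E_i$ gives $\gen(E_i) \leqslant \lceil \log_q(n_i r_i) \rceil \leqslant \lceil \log_q(d) \rceil$, and taking the maximum finishes the argument.

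So the heart of the matter is the pure case. By Theorem~\ref{thm.main2}, $\gen((\F_{q^n})^r) \leqslant \lceil \tfrac{1}{n}\log_q(nr)\rceil + 1$. I need this to be at most $\lceil \log_q(nr)\rceil$, i.e.\ I want
\[
\left\lceil \tfrac{1}{n}\log_q(nr)\right\rceil + 1 \leqslant \lceil \log_q(nr)\rceil.
\]
This should hold once $nr$ is not too small: writing $L = \log_q(nr)$, the left side is roughly $L/n + 1$ and the right side is roughly $L$, and since $n \geqslant 1$ we have $L - L/n \geqslant L(1 - 1/n)$, which beats $1$ as soon as $L$ is at least, say, $2$ (using $n \geqslant 2$ in that regime) — the only borderline cases are $n = 1$ or very small values of $r$. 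When $n = 1$, Corollary~\ref{cor.n=1} gives $\gen(E) = \lceil \log_q r\rceil = \lceil \log_q d\rceil$ directly (the "$+1$" slack in Theorem~\ref{thm.main2} is never used), so that case is immediate. For $n \geqslant 2$ I would handle the small cases ($r = 1$, and perhaps $r$ with $nr < q^2$) separately by hand, using the exact characterization of $\gen$ via $N_{q,n}(g)$ together with the bound $N_{q,n}(1) \leqslant \tfrac{1}{n}q^n < q^n$, which already shows $\gen((\F_{q^n})^r) \leqslant 1 \leqslant \lceil \log_q(nr)\rceil$ whenever $r \leqslant \tfrac1n q^n$.

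The main obstacle is the bookkeeping at the boundary: Theorem~\ref{thm.main2} carries a "$+1$" that I must absorb, and whether $\lceil \tfrac1n \log_q(nr)\rceil + 1 \leqslant \lceil \log_q(nr)\rceil$ depends delicately on the fractional parts of $\tfrac1n\log_q(nr)$ and $\log_q(nr)$. Rather than fight the ceilings, the cleaner route is probably to bypass Theorem~\ref{thm.main2} in the awkward range and argue directly: set $g = \lceil \log_q(nr)\rceil$, so $q^g \geqslant nr$, hence $\tfrac1n q^{gn} \geqslant \tfrac1n (nr)^n \geqslant nr \cdot n^{n-2} r^{n-1} \geqslant nr$ for $n \geqslant 1$; then by Lemma~\ref{lem.bounds-commutative}(a) it suffices to check $N_{q,n}(g) \geqslant r$, and Lemma~\ref{lem.bounds-commutative}(b) gives $N_{q,n}(g) \geqslant \tfrac1n q^{gn}(1 - \tfrac1q) \geqslant \tfrac1n (nr)^n (1-\tfrac1q)$, which exceeds $r$ provided $(nr)^{n-1} n (1 - \tfrac1q) \geqslant 1$ — true except when $n = 1$ and $1 - \tfrac1q < 1/r$, i.e.\ $r < \tfrac{q}{q-1} \leqslant 2$, so $r = 1$, which is trivial. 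Assembling: the $n=1$ subtlety dissolves because $(nr)^{n-1} = 1$ forces the separate trivial check, and for $n \geqslant 2$ the inequality $(nr)^{n-1} n(1-\tfrac1q) \geqslant 1$ is automatic. Then Lemma~\ref{lem.ideals}(b) yields $\gen((\F_{q^n})^r) \leqslant g = \lceil\log_q(nr)\rceil$, and the reduction above completes the proof of Corollary~\ref{cor.fr}.
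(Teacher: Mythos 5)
Your final argument is correct, and it takes a route that differs from the paper's in a useful way. The paper reduces to the pure case $E=(\F_{q^n})^r$ via Proposition~\ref{prop.pure} exactly as you do and handles $n=1$ by Corollary~\ref{cor.n=1}; but for $n\geqslant 2$ it quotes Theorem~\ref{thm.main2} to get $\gen(E)\leqslant\lceil\tfrac1n\log_q(nr)\rceil+1\leqslant\lceil\tfrac12\log_q d\rceil+1$, which absorbs the ``$+1$'' only when $\log_q d>1$, and then treats the leftover range $nr\leqslant q$ by a separate estimate showing $r\leqslant N_{q,n}(1)$. You instead bypass Theorem~\ref{thm.main2} entirely in the pure case: setting $g=\lceil\log_q(nr)\rceil$ and combining Lemma~\ref{lem.ideals}(b) with Lemma~\ref{lem.bounds-commutative}(b) gives $N_{q,n}(g)\geqslant\tfrac1n q^{gn}(1-\tfrac1q)\geqslant\tfrac1n(nr)^n(1-\tfrac1q)\geqslant r$, which for $n\geqslant2$ needs only $(nr)^{n-1}(1-\tfrac1q)\geqslant1$ and holds uniformly, with no boundary case. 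This is cleaner in that it avoids both the ceiling bookkeeping and the small-$d$ case split; the ingredients (the ideal-counting characterization and the lower bound on $N_{q,n}$) are the same as the paper's. Three small blemishes, none fatal: your sufficient condition should read $(nr)^{n-1}(1-\tfrac1q)\geqslant1$, without the extra factor $n$, and for $n=1$ this crude bound fails for \emph{every} $r$ (not just $r=1$), so the $n=1$ case genuinely must go through Corollary~\ref{cor.n=1}, as you did earlier; the mid-proof citation of Lemma~\ref{lem.bounds-commutative}(a) for ``it suffices to check $N_{q,n}(g)\geqslant r$'' should be Lemma~\ref{lem.ideals}(b), which you do cite at the end; and the aside in your second paragraph claiming the \emph{upper} bound $N_{q,n}(1)\leqslant\tfrac1n q^n$ shows $\gen\leqslant1$ for $r\leqslant\tfrac1n q^n$ is backwards (one needs a lower bound on $N_{q,n}(1)$), but your final argument never uses it.
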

	
	The bound in the corollary is tight, since $\gen(E) = \lceil \log_q (d)  \rceil$ when 
	$E=(\F_q)^d$ by Corollary~\ref{cor.n=1}.

	\begin{proof}
		By Proposition~\ref{prop.pure}, we may assume without loss of generality that
		$E=(\F_{q^n})^r$ so that $d=\dim E=nr$. As we mentioned above,
	    for $n=1$, $\gen(E) = \lceil \log_q (d)  \rceil$ by Corollary~\ref{cor.n=1}. We will thus assume that $n>1$ from now on.
		By Theorem~\ref{thm.main2},		
		$\gen (E)\leqslant \lceil \dfrac{1}{n}\log_q(rn)\rceil +1 \leqslant \lceil \dfrac{1}{2}\log_q (d ) \rceil+1$.
		Thus $\gen( E)\leqslant \lceil \log_q (d) \rceil$ when $\log_q (d) >1 $.
		
		It remains to consider the case, where $\log_q (d ) \leqslant 1$, or equivalently, $nr\leqslant q$. 
		We need to show that in this case $E$ can be generated by one element. By Lemma~\ref{lem.ideals}, it suffices to prove  
		that $r\leqslant N_{q,n}(1)$. By Lemma~\ref{lem.bounds-commutative}(b), we have 
		\[ N_{q,n}(1)\geqslant \dfrac{1}{n}q^n(1 - \dfrac{1}{q}) \geqslant \dfrac{1}{n}q(q-1) \geqslant \dfrac{q}{n} \geqslant r \, , \]
		since $n> 1$ and $q\geqslant nr$. This completes the proof of the corollary.
		 \end{proof}
	
\section{The integers $N_{q, n, m}(g)$}
\label{sec:separable}

	Let $G(q,n,m)$ denote the group of $\F_q$-algebra automorphisms
	of $B := \nMat{\F_{q^n}}{m\times m}$. Let $\pi$ be the natural homomorphism
    $\pi \colon G(q,n,m)\to \Gal(\F_{q^n}/\F_q)$ given by
	restricting an element of $G(q, n, m)$ to the center $\F_{q^n}$ of $B$.
	By the Skolem--Noether Theorem (see, e.g.,~\cite[Theorem~7.1.10]{rowenII}), $\Ker(\pi)$
	is the group of inner automorphisms of $B$. That is, $\Ker(\pi) \simeq \PGL_m(\F_{q^n}):=
	\GL_m(\F_{q^n})/\F_{q^n}^\times$. Using the resulting short exact sequence of finite groups
	\[  1  \to \PGL_m(\F_{q^n})\to G(q,n,m)\to \Gal(\F_{q^n}/\F_q) \to  1 , \]
    we see that	
	\[
	|G(q,n,m)|=n\cdot |\PGL_m(\F_{q^n})|=\dfrac{n\prod_{i=0}^{m-1}(q^{nm}-q^{ni})}{q^n-1}\ 
	\]
	is the number $C$ appearing in the statement of Theorem~\ref{thm.main3}. 

\begin{defn} \label{def.Nqnm(g)}
In the sequel, $N_{q, n,m}(g)$ will denote the number of two-sided ideals $I\subset R_g=\F_q\langle X_1,\dots,X_g \rangle$ 
	for which $R_g/I\cong \nMat{\F_{q^n}}{m\times m}$ as $\F_q$-algebras.
\end{defn}

    Let $A =\nMat{\F_{q^{n}}}{m\times m}^r$.
	It is immediate from Lemma~\ref{lem.ideals}(a) that $\gen(A)$
	is the smallest integer $g$ such that
	\[
	r\leqslant N_{q,n,m}(g)\ .
	\]
	
	The following lemma is a partial extension of Lemma~\ref{lem.ideal-count} to the setting of
    separable algebras (not necessarily commutative).
	
	\begin{lemma} \label{LM:ideal-count-non-com} 
	Let $n, m$ be positive integers, $q$ be a prime power, 
	$R_g = \F_q\langle X_1, \ldots, X_g\rangle$ be the free associative algebra on $g$ generators
	over $\F_q$, and $S_g$ be the set of $g$-tuples $(a_1,\dots,a_g)\in \nMat{\F_{q^n}}{m\times m}^g$ 
	which generate $\nMat{\F_{q^n}}{m\times m}$ as an $\F_q$-algebra.
	
	The following three sets are in (pairwise) bijective correspondence. 
	In particular, each of these sets has $N_{q,n,m}(g)$ elements.
    
	\begin{enumerate}
	\item[{\rm(a)}]
	ideals $I \subset R_g$ such that $R_g/I \cong \nMat{\F_{q^n}}{m\times m}$, 
	\item[{\rm(b)}]
	orbits of $\F_q$-algebra epimorphisms $\phi:R_g\to  \nMat{\F_{q^n}}{m\times m}$ under the
	natural action of $G(q,n,m)$,
	\item[{\rm(c)}] $G(q,n,m)$-orbits in $S_g$.
	\end{enumerate}
	Moreover, every $G(q,n,m)$-orbit in $S_g$ consists
	of   $|G(q,n,m)|$ elements. 
	\end{lemma}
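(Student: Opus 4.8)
The plan is to mirror the proof of Lemma~\ref{lem.ideal-count} as closely as possible, substituting the target algebra $B := \nMat{\F_{q^n}}{m\times m}$ for $\F_{q^n}$ and its automorphism group $G(q,n,m)$ for $\Gal(\F_{q^n}/\F_q)$. First I would establish the bijection between (a) and (b). Given an ideal $I \subset R_g$ with $R_g/I \cong B$, fix an $\F_q$-algebra isomorphism $\psi \colon R_g/I \xrightarrow{\sim} B$ and let $\phi$ be the composite $R_g \to R_g/I \xrightarrow{\psi} B$; changing $\psi$ replaces $\phi$ by $\alpha \circ \phi$ for $\alpha \in G(q,n,m)$, so the $G(q,n,m)$-orbit of $\phi$ is well-defined. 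Conversely, an epimorphism $\phi \colon R_g \to B$ is sent to $\ker(\phi)$, and two epimorphisms have the same kernel if and only if they differ by an $\F_q$-algebra automorphism of $B$, i.e.\ lie in the same $G(q,n,m)$-orbit. These two assignments are mutually inverse, which gives the bijection.

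Next I would handle the bijection between (b) and (c). An $\F_q$-algebra epimorphism $\phi \colon R_g \to B$ is determined by the images $(\phi(X_1),\dots,\phi(X_g)) \in B^g$, and surjectivity of $\phi$ is exactly the condition that these images generate $B$ as an $\F_q$-algebra, i.e.\ that the tuple lies in $S_g$. Conversely any tuple in $S_g$ defines such an epimorphism by the universal property of the free algebra $R_g$. This identification $\phi \mapsto (\phi(X_1),\dots,\phi(X_g))$ is $G(q,n,m)$-equivariant for the action $\alpha \colon \phi \mapsto \alpha\circ\phi$ on epimorphisms and the diagonal action $\alpha \colon (a_i) \mapsto (\alpha(a_i))$ on $B^g$, so it descends to a bijection of orbit sets. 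Combining the two bijections shows all three sets have the same cardinality, which equals $N_{q,n,m}(g)$ by Definition~\ref{def.Nqnm(g)} applied to set (a).

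For the final assertion — that every $G(q,n,m)$-orbit in $S_g$ has exactly $|G(q,n,m)|$ elements — by the orbit-stabilizer theorem it suffices to show the stabilizer of any $(a_1,\dots,a_g) \in S_g$ is trivial. Suppose $\alpha \in G(q,n,m)$ fixes each $a_i$. Since the $a_i$ generate $B$ over $\F_q$ and $\alpha$ is an $\F_q$-algebra homomorphism, $\alpha$ fixes every element of $B$, so $\alpha = \mathrm{id}$. Hence the stabilizer is trivial and each orbit is free, of size $|G(q,n,m)| = C$.

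I do not expect any genuine obstacle here — the argument is a routine transcription of Lemma~\ref{lem.ideal-count}, with the one extra ingredient being the orbit-stabilizer count, whose only real input is the elementary observation that an algebra automorphism fixing a generating set is the identity. The only point requiring a little care is checking that the map in (a)$\leftrightarrow$(b) is well-defined independently of the choice of isomorphism $\psi$ (equivalently, that two surjections $R_g \to B$ with equal kernels differ by an element of $G(q,n,m)$), and this is immediate: such surjections induce isomorphisms $R_g/\ker \to B$ whose difference is an automorphism of $B$. So the bulk of the work is simply organizing these verifications in parallel with the commutative case already proved.
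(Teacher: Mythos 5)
Your proof is correct and follows the same route as the paper: the (a)--(b) and (b)--(c) bijections are obtained by transcribing the argument of Lemma~\ref{lem.ideal-count} with $P_g$ replaced by $R_g$ and $\Gal(\F_{q^n}/\F_q)$ by $G(q,n,m)$, and the orbit-size claim follows from the triviality of the stabilizer of a generating tuple. No gaps; your write-up is just a more detailed version of the paper's own proof.
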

	
	\begin{proof}
		The bijective correspondences between (a) and (b) and between (b) and (c) are constructed in exactly 
		the same way as in the proof of Lemma~\ref{lem.ideal-count}, with the commutative polynomial ring $P_g$ 
		replaced by the free associative algebra $R_g$.  To prove the last assertion, note that if
		$a_1, \dots, a_g$ generate $\nMat{\F_{q^n}}{m\times m}$ as an $\F_q$-algebra, then the stabilizer
		of $(a_1, \dots, a_g)$ in $G(q,m,n)$ is necessarily trivial.
	\end{proof}
	
	\begin{remark} \label{rem.assoc}
		In the commutative setting of Lemma~\ref{lem.ideal-count} (i.e., for 
		$m=1$), the set $S_g$ consists precisely of the $g$-tuples $(a_1, \dots, a_g)$
		whose $G(q,n,m)$-orbit has exactly $|G(q,n,m)|$ elements. For $m \geqslant 2$, this 
		is not so in general. In other words, a $g$-tuple $(a_1,\dots,a_g)\in \nMat{\F_{q^n}}{m\times m}^g$
		with trivial stabilizer in $G(q,n,m)$ may not   generate $\nMat{\F_{q^n}}{m\times m}$.
		For example,  when $m=2$, $n=1$, and $g = 3$,  
		\[ (a_1, a_2, a_3) = \left(\begin{pmatrix} 1 & 0 \\ 0 & 0\end{pmatrix},
		\begin{pmatrix} 0 & 1 \\ 0 & 0\end{pmatrix},
		\begin{pmatrix} 0 & 0 \\ 0 & 1\end{pmatrix}\right) \]
		has trivial stabilizer in $G(q,1,2)=\PGL_2(\F_q)$. On the other hand, since $a_1$, $a_2$ and $a_3$
		are upper-triangular matrices, they do not generate $\nMat{\F_q}{2\times 2}$.
		For this reason, Lemma~\ref{LM:ideal-count-non-com} does not allow us to obtain a formula 
		for $N_{q,n,m}(g)$
	   when $m \geqslant 2$, analogous to the formula in Proposition~\ref{prop.Mobius}. However, it does lead to useful
	   estimates on $N_{q,n,m}(g)$.
	\end{remark}
	
	\begin{corollary} \label{cor.ideal-count}
	\begin{enumerate}
	 \item[{\rm(a)}] $N_{q, n, 1}(g) = N_{q, n}(g)$. 
	
	\item[{\rm(b)}] $N_{q, n, m}(g) = C^{-1}|S_g| \leqslant C^{-1} q^{gnm^2}$.
	
	\item[{\rm(c)}] Suppose $B$ is a proper $\F_q$-subalgebra of $\nMat{\F_{q^n}}{m\times m}$. Then
	$N_{q, n, m}(g) = C^{-1} |S_g| \leqslant C^{-1}(q^{gnm^2} - |B|^g)$.
	\end{enumerate}
	\end{corollary}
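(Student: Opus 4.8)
The plan is to prove Corollary~\ref{cor.ideal-count} directly from Lemma~\ref{LM:ideal-count-non-com} together with the earlier bookkeeping. Part (a) is immediate: when $m = 1$ the free associative algebra $R_g$ coincides with the commutative polynomial algebra $P_g$, and an ideal $I$ with $R_g/I \cong \F_{q^n}$ is exactly an ideal with $P_g/I \cong \F_{q^n}$; thus the set counted by $N_{q,n,1}(g)$ is literally the set in Definition~\ref{def.Nqn(g)}, so the two numbers agree. (One may alternatively invoke Lemma~\ref{LM:ideal-count-non-com}(c) and Lemma~\ref{lem.ideal-count}(c): both sets are $G(q,n,1)$-orbits, and $G(q,n,1) = \Gal(\F_{q^n}/\F_q)$.)

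For part (b), I would apply Lemma~\ref{LM:ideal-count-non-com}: the $N_{q,n,m}(g)$ ideals are in bijection with the $G(q,n,m)$-orbits in $S_g$, and the last sentence of that lemma says every such orbit has exactly $|G(q,n,m)| = C$ elements. Hence $N_{q,n,m}(g) = |S_g|/C = C^{-1}|S_g|$. Since $S_g \subseteq \nMat{\F_{q^n}}{m\times m}^g$, which has $(q^{nm^2})^g = q^{gnm^2}$ elements, we get $|S_g| \leqslant q^{gnm^2}$ and therefore $N_{q,n,m}(g) \leqslant C^{-1} q^{gnm^2}$.

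For part (c), the idea is to refine the trivial bound $|S_g| \leqslant q^{gnm^2}$ by deleting from $\nMat{\F_{q^n}}{m\times m}^g$ a set of tuples that definitely fail to generate. If $B$ is a proper $\F_q$-subalgebra of $\nMat{\F_{q^n}}{m\times m}$, then any tuple $(a_1,\dots,a_g)$ with all $a_i \in B$ has $\F_q[a_1,\dots,a_g] \subseteq B \subsetneq \nMat{\F_{q^n}}{m\times m}$, so it lies outside $S_g$. There are $|B|^g$ such tuples, hence $S_g \subseteq \nMat{\F_{q^n}}{m\times m}^g \setminus B^g$, which gives $|S_g| \leqslant q^{gnm^2} - |B|^g$, and combining with $N_{q,n,m}(g) = C^{-1}|S_g|$ from part (b) yields the claim.

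There is essentially no obstacle here: the corollary is a direct packaging of Lemma~\ref{LM:ideal-count-non-com} plus an elementary counting observation. The only point requiring a word of care is that the subalgebra $B$ in part (c) must be closed under the algebra operations (so that $B^g \cap S_g = \emptyset$), which is built into the hypothesis that $B$ is a \emph{subalgebra}, not merely a subspace; and that $|B|$ is well-defined and finite because $\nMat{\F_{q^n}}{m\times m}$ is a finite set.
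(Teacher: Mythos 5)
Your proposal is correct and follows essentially the same route as the paper: in (b) you use the final assertion of Lemma~\ref{LM:ideal-count-non-com} (free orbits of size $C$) to get $N_{q,n,m}(g)=C^{-1}|S_g|$ and then the trivial bound $|S_g|\leqslant q^{gnm^2}$, and in (c) you observe $S_g\subseteq \nMat{\F_{q^n}}{m\times m}^g\setminus B^g$; both are verbatim the paper's argument. The one slip is in your primary justification of (a): for $g\geqslant 2$ the free associative algebra $R_g=\F_q\langle X_1,\dots,X_g\rangle$ does \emph{not} coincide with $P_g=\F_q[x_1,\dots,x_g]$, so that sentence is false as stated. What is true, and what would repair that route, is that any two-sided ideal $I\subseteq R_g$ with commutative quotient $R_g/I\cong\F_{q^n}$ must contain all commutators $X_iX_j-X_jX_i$, so such ideals correspond bijectively to ideals of $P_g$ with quotient $\F_{q^n}$. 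Your parenthetical alternative --- comparing the orbit descriptions in Lemma~\ref{lem.ideal-count}(c) and Lemma~\ref{LM:ideal-count-non-com}(c), using that $G(q,n,1)=\Gal(\F_{q^n}/\F_q)$ since $\PGL_1$ is trivial --- is exactly the paper's proof of (a), so the gap is immediately closed; just promote that remark to the main argument.
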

    
    \begin{proof} (a) The set of $N_{q, n}(g)$ elements described in Lemma~\ref{lem.ideal-count}(c) is the same as 
    the set of $N_{q, n, 1}(g)$ elements described in Lemma~\ref{LM:ideal-count-non-com}(c).
    
     (b) Follows from $S_g \subset \nMat{\F_{q^n}}{m\times m}$ and $|\nMat{\F_{q^n}}{m\times m}| = q^{gnm^2}$.

     (c) Clearly, if $a_1, \ldots, a_g \in B^g$, then $(a_1, \dots, a_g) \not \in S_g$.
     From this we see that $S_g \subset {\nMat{\F_{q^n}}{m\times m}^g \setminus B^g}$, and thus
     $|S_g| \leqslant (q^{gnm^2} - |B|^g)$.
	\end{proof}

\section{Proof of Theorem~\ref{thm.main3}}

	\begin{lemma}\label{lem.bounds-noncommutative}
%
%
$ N_{q,n,m}(g)	\geqslant  |G(q,n,m)|^{-1}{q^{(g-1)nm^2}}$ for every $g, m \geqslant 2$ and $n \geqslant 1$.
	\end{lemma}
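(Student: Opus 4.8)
By Corollary~\ref{cor.ideal-count}(b) we have $N_{q,n,m}(g)=C^{-1}|S_g|$, where $C=|G(q,n,m)|$ and $S_g$ is the set of $g$-tuples in $B:=\nMat{\F_{q^n}}{m\times m}$ that generate $B$ as an $\F_q$-algebra; so it suffices to exhibit at least $q^{(g-1)nm^2}$ generating $g$-tuples. The plan is to fix a maximal subfield $L\cong\F_{q^{nm}}$ of $B$, realize $B$ as a crossed product over $L$, and let $(a_1,\dots,a_g)$ range over tuples in which $a_1$ generates $L$ over $\F_q$, $a_2$ satisfies an explicit genericity condition relative to the crossed-product decomposition, and $a_3,\dots,a_g$ are arbitrary; every such tuple will lie in $S_g$ since already $a_1,a_2$ generate $B$.

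Concretely, since $[\F_{q^{nm}}:\F_{q^n}]=m$, the field $L:=\F_{q^{nm}}$ acting on itself by multiplication embeds as an $\F_q$-subalgebra of $B=\operatorname{End}_{\F_{q^n}}(L)$, and the $q^n$-power map $\varphi\colon x\mapsto x^{q^n}$ is $\F_{q^n}$-linear, hence lies in $B$; it satisfies $\varphi^m=1$ and $\varphi\ell\varphi^{-1}=\ell^{q^n}$ for $\ell\in L$. A linearized-polynomial count shows that $1,\varphi,\dots,\varphi^{m-1}$ are $L$-linearly independent, so $B=\bigoplus_{i=0}^{m-1}L\varphi^i$. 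For $a_2=\sum_{i=0}^{m-1}\ell_i\varphi^i$ put $\operatorname{supp}(a_2)=\{\,i:\ell_i\neq 0\,\}$. I would then prove that if $\F_q[a_1]=L$, the $\F_q$-subalgebra generated by $a_1$ and $a_2$ equals the subring generated by $L$ and $a_2$, and that this subring is $\bigoplus_{j\in H}L\varphi^j$, where $H\leqslant\Z/m$ is the subgroup generated by $\operatorname{supp}(a_2)$. In particular $(a_1,a_2)$ generates $B$ whenever $\F_q[a_1]=L$ and $\operatorname{supp}(a_2)$ generates $\Z/m$ — for instance, whenever $\ell_1\neq 0$.

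To count: $L$ has $nm\cdot N_{q,nm}(1)\geqslant nm\geqslant 2$ primitive elements over $\F_q$ (using $N_{q,nm}(1)\geqslant 1$ and $m\geqslant 2$); there are $(q^{nm}-1)q^{nm(m-1)}$ elements $a_2\in B$ with $\ell_1\neq 0$; and $q^{(g-2)nm^2}$ choices for $(a_3,\dots,a_g)$. Therefore
\[ |S_g|\ \geqslant\ 2\,(q^{nm}-1)\,q^{nm(m-1)}\,q^{(g-2)nm^2}\ =\ 2\,(1-q^{-nm})\,q^{(g-1)nm^2}\ \geqslant\ q^{(g-1)nm^2}, \]
since $q^{nm}\geqslant 2$; dividing by $C$ yields the lemma.

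The one substantial step is the identification of the subring generated by $L$ and $a_2$ with $\bigoplus_{j\in H}L\varphi^j$. The inclusion $\subseteq$ is clear. For $\supseteq$ one must recover each homogeneous component $L\varphi^i$ with $i\in\operatorname{supp}(a_2)$; this I would get from the conjugates $\lambda a_2\lambda^{-1}=\sum_i\ell_i\lambda^{\,1-q^{ni}}\varphi^i$ ($\lambda\in L^\times$), which all lie in the subring, together with $\F_q$-linear independence of the characters $\lambda\mapsto\lambda^{\,1-q^{ni}}$ of $L^\times$; these characters are pairwise distinct because $q^n$ has order $m$ modulo $q^{nm}-1$. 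Once the pieces $L\varphi^i$, $i\in\operatorname{supp}(a_2)$, are available, closure under multiplication gives $L\varphi^j$ for all $j\in H$. The remaining verifications (the linearized-polynomial independence, and the identification of $\F_q[a_1,a_2]$ with the subring generated by $L$ and $a_2$) are routine.
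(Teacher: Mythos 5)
Your proposal is correct, but it proves the key inequality $|S_g|\geqslant q^{(g-1)nm^2}$ by a genuinely different construction than the paper. The paper also reduces to exhibiting roughly $q^{nm^2}$ generating pairs and padding the remaining $g-2$ coordinates arbitrarily, but its pairs are $(A+\gamma I_m,B)$ with $A$ a scaled nilpotent superdiagonal matrix and $B$ essentially arbitrary subject to the single constraint $\alpha_1\cdots\alpha_{m-1}\beta_{m1}=u$ for a fixed generator $u$ of $\F_{q^n}$; generation is then verified by a self-contained five-step computation with elementary matrices, giving $(q^n-1)^{m-1}q^{n(m^2-1)+1}\geqslant q^{nm^2}$ pairs. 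You instead use the cyclic-algebra presentation $\nMat{\F_{q^n}}{m\times m}\cong\bigoplus_{i=0}^{m-1}L\varphi^i$ with $L=\F_{q^{nm}}$ and $\varphi$ the $q^n$-Frobenius, take $a_1$ a primitive element of $L$ over $\F_q$ and $a_2$ with nonzero $\varphi$-component, and prove generation via conjugation by $L^\times$ together with Dedekind's independence of the characters $\lambda\mapsto\lambda^{1-q^{ni}}$ (distinct since $q^n$ has order $m$ modulo $q^{nm}-1$), which recovers each component $L\varphi^i$ and then all of $\bigoplus_{j\in H}L\varphi^j$ for $H=\langle\operatorname{supp}(a_2)\rangle$; your count $2(q^{nm}-1)q^{nm(m-1)}q^{(g-2)nm^2}\geqslant q^{(g-1)nm^2}$ is correct (the factor $2$ from $nm\cdot N_{q,nm}(1)\geqslant 2$ primitive elements is genuinely needed and is justified). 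What each approach buys: the paper's argument is elementary and entirely explicit, needing nothing beyond matrix manipulation, and leaves almost all entries of $B$ free; yours is structurally cleaner, with a simpler genericity condition (one coordinate of $a_2$ nonzero) and a conceptual reason for generation, at the cost of invoking the crossed-product decomposition, the linearized-polynomial independence of $1,\varphi,\dots,\varphi^{m-1}$, and character independence — steps you flag as routine, and which indeed are, so I see no gap.
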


	\begin{proof} 
		By Corollary~\ref{cor.ideal-count}(b), we need to establish the inequality $|S_g|\geqslant 2q^{(g-1)nm^2}$.
		Since $S_2\times \nMat{\F_{q^n}}{m\times m}^{g-2}\subseteq S_g$,
		it suffices to show that
	\begin{equation} \label{e.S2}
	|S_2|\geqslant  q^{nm^2}.
	\end{equation}
    To prove~\eqref{e.S2}, fix a nonzero generator $u$ of $\F_{q^n}$ over $\F_q$ and
	    consider pairs of matrices 
        \begin{equation} \label{e.matrix-pairs}	
	    A = \alpha_1 E_{1,2} + \alpha_3 E_{2,3} + \dots + \alpha_{m-1} E_{m-1, m}
	    \qquad\text{and}\qquad
	    B = \sum_{i, j = 1, \ldots, m} \beta_{ij} E_{i, j}
	    \end{equation}
	    where
	    $\alpha_i$ and $\beta_{ij}$ are arbitrary elements of $\F_{q^n}$, subject to
	    \begin{equation} 
	    \label{e.bm1} 
	    \alpha_1 \dots \alpha_{m-1} \beta_{m1} = u \, . 
	    \end{equation}
	Here, as usual, $E_{i, j}$ denotes the $(i, j)$-elementary matrix, i.e., an $m \times m$-matrix with
	$1$ in the $(i, j)$-position and zeroes elsewhere.
	 Note that $\alpha_1, \dots, \alpha_{m-1}$ can be arbitrary non-zero elements of $\F_{q^n}$.
	 Once they are chosen, $\beta_{m 1}$ is uniquely determined by~\eqref{e.bm1}. Thus the number of
	 pairs $(A, B)$ of the above form is $(q^n - 1)^{m-1}(q^n)^{m^2-1}$.

\smallskip
		{\noindent \it Claim.} Any pair of matrices $(A, B)$ defined by~\eqref{e.matrix-pairs} and~\eqref{e.bm1}
		generates $\nMat{\F_{q^n}}{m\times m}$ as an $\F_q$-algebra.

\smallskip
	
	The claim implies that the pair  $(A+\gamma I_m,B)$ generates $\nMat{\F_{q^n}}{m\times m}$
	for every $\gamma\in \F_{q }$. 
	This gives us $(q^n - 1)^{m-1}(q^n)^{m^2-1}q=(q^n-1)^{m-1}q^{n(m^2-1)+1}$ pairs of generators.
	Thus, once the claim is established,
	we can conclude that
	\[ |S_2| \geqslant (q^n - 1)^{m-1}q^{n(m^2-1)+1}
	\geq   (q^n - 1) q^{n(m^2-1)+1 }
	=(q-q^{1-n})q^{nm^2}\geq q^{nm^2} \, . \]
    This would complete the proof of~\eqref{e.S2} and thus of part (b). 
    
\smallskip
    
    We now turn to the proof of the claim. Denote by $\Lambda$ the $\F_q$-subalgebra of $\nMat{\F_{q^n}}{m\times m}$
    generated by $A$ and $B$. Our goal is to show that $\Lambda = \nMat{\F_{q^n}}{m\times m}$. We will do this in several steps.
    
\smallskip
    {\noindent \it Step 1.} For every $c \in \F_{q^n}$, $\Lambda$ contains a matrix of the form $c E_{1, 1} + c_2 E_{1, 2} + \dots + c_m E_{1, m}$
    for some $c_{2},\dots,c_m \in \F_{q^n}$.

\smallskip    
    {\noindent \it Proof.} Since $A^{m-1} = \alpha_1 \dots \alpha_{m-1} E_{1, m}$, 
    the matrix $A^{m-1} B$ has the form $u E_{1, 1} + t_2 E_{1, 2} + \dots + t_m E_{1, m}$
    for some $t_2, \dots, t_m\in \F_{q^n}$. In particular, $A^{m-1}B$ is an upper-triangular matrix with diagonal entries $u, 0, \dots, 0$.
    If $p(x)$ is a polynomial with coefficients in $\F_q$, then
    $p(A^{m-1} B) = p(u) E_{1, 1} + w_2 E_{1, 2} + \dots + w_m E_{1, m} \in \Lambda$ for some $w_2, \dots, w_m \in \F_{q^n}$.
    The desired conclusion now follows from the fact that $u$ is a generator for $\F_{q^n}$ over $\F_q$.
    
    \smallskip
    {\noindent \it Step 2.} For every $c \in \F_{q^n}$ and every $j = 1, 2, \dots, m$, $\Lambda$ contains an element of the form
    $c E_{1, j} + s_{j+1} E_{1, j+1} + \dots + s_m E_{1, m}$ for some $s_{j+1}, \dots, s_m \in \F_{q^n}$.

\smallskip    
    {\noindent \it Proof.} We argue by induction on $j$. The base case, where $j = 1$, is given by Step 1. For the induction step, assume that
    $j \geqslant 2$ and for every $c' \in \F_{q^n}$, there exist $s_j, \dots, s_m \in \F_{q^n}$ such that
    \[ L:= c' E_{1, j-1} + s_{j} E_{1, j} + \dots + s_m E_{1, m} \in \Lambda \, . \]
    Now observe that $L A \in \Lambda$ is of the form $c' \alpha_{j-1} E_{1, j} + t_{j+1} E_{1, j+1} + \dots + t_m E_{1, m}$ for some
    $t_{j+1}, \ldots, t_m \in \F_{q^n}$. Since $\alpha_{j-1} \neq 0$ by~\eqref{e.bm1}, we see that the coefficient $c' \alpha_{j-1}$
    of $E_{1, j}$ can assume an arbitrary value in $\F_{q^n}$.
    
\smallskip
    
    {\it \noindent Step 3.} $\Lambda$ contains $\F_{q^n} E_{1, j}$ for every $j = 1, \dots, m$. 

\smallskip    
    {\it \noindent Proof.} The elements given in Step 2 span $\F_{q^n} E_{1, 1} \oplus \F_{q^n} E_{1, 2} \oplus \dots \oplus \F_{q^n} E_{1, m}$ as an
    $\F_q$-vector space.

\smallskip    
    {\noindent \it Step 4.} $\Lambda$ contains $\F_{q^n} E_{k, j}$ for every $k = 1, \dots, m-1$ and $j = 1, \dots, m$. 

\smallskip    
    {\noindent \it Proof.} We argue by induction on $k$. The base case, $k = 1$, is given by Step 3. If $1 < k < m$, assume that 
    $\F_{q^n} E_{k', j} \subset \Lambda$ for every $k' = 1, \dots, k-1$ and every $j = 1, \dots, m$. 
    Subtracting a linear
    combination of $E_{1, 2}, \dots, E_{k - 1, k}$ from $A$, we see that 
    \[ A_k := \alpha_{k} E_{k, k+1} + \dots + \alpha_{m-1} E_{m-1, m} \in \Lambda \]
    and hence, so is $A_k^{m-k} = \alpha E_{k, m}$, where $\alpha = \alpha_k \cdots \alpha_{m-1}$.
    By Step 3, $t E_{1, j} \in \Lambda$ for every $t \in \F_{q^n}$ and every $j = 1, \dots, m$. 
    Consequently,
    so is $(\alpha E_{k, m}) B (t E_{1, j}) = (t \alpha \beta_{m  1}) E_{k, j}$. Since $\alpha\beta_{m  1} \neq 0$, this shows that $\F_{q^n} E_{k, j} \subset \Lambda$, 
    as required.

\smallskip    
    
   	{\it \noindent Step 5.} $\Lambda$ contains $\F_{q^n} E_{m, j}$ for every $j = 1, \dots, m$.

\smallskip    
    {\it \noindent Proof.} By Step 4, $\Lambda$ contains $E_{k, k}$ for $k = 1, \dots, m-1$. Since it also contains the identity
    element $I_m = E_{1, 1} + \dots + E_{m, m}$, we see that $E_{m, m} = I_m - E_{1, 1} - \dots - E_{m-1, m-1} \in \Lambda$.
    To show that $\F_{q^n} E_{m, j} \subset \Lambda$ for every $j = 1, \dots, m$, we will use the same 
    method as in Step 4. 
    By Step 3, $t E_{1, j} \in \Lambda$ for every $t \in \F_{q^n}$. Thus,
    \[ E_{m, m} B (t E_{1, j}) = t \beta_{m  1} E_{m, j} \in \Lambda  \, . \] 
    Since $\beta_{m  1} \neq 0$, this shows that $\F_{q^n} E_{m, j} \subset \Lambda$, as claimed. 
    
    \smallskip
Taken together, Steps 4 and 5 show that $\Lambda$ contains $\F_{q^n} E_{k, j}$ for every $k, j = 1, \dots, m$.
As a result, $\Lambda = \nMat{\F_{q^n}}{m\times m}$, which completes the proof of the claim,
 and thus of the lemma.
	\end{proof}
	
\begin{proof}[Proof of Theorem~\ref{thm.main3}]

     Let $A = \nMat{\F_{q^n}}{m\times m} \times \dots \times \nMat{\F_{q^n}}{m\times m}$ ($r$ times) and $C=|G(q,n,m)|$,
     as in the statement of the theorem. When $m=1$, Theorem~\ref{thm.main3} reduces to Theorem~\ref{thm.main2}. Thus, we may assume that $m>1$.

    By Lemma~\ref{lem.ideals}, $\gen(A)$ is the unique integer $g$ satisfying
	\begin{equation} \label{e.ideal-count} N_{q,n,m}(g -1) < r\leqslant N_{q,n,m}(g) \ . \end{equation}
     Our goal is to show that 
     \begin{equation} \label{e.main3}
	{\textstyle\dfrac{1}{nm^2}}\log_q(C \cdot r) 
	\leqslant 
	g <  {\textstyle\dfrac{1}{nm^2}}
	\log_q(C \cdot r)+2\ .	
	\end{equation}
	
	Suppose $g \geqslant 3$. By Lemma~\ref{lem.bounds-noncommutative} and Corollary~\ref{cor.ideal-count}(b),
	\eqref{e.ideal-count} implies
	\[
	C^{-1}q^{(g-2)nm^2}  <  r \leqslant q^{gnm^2} C^{-1} \ .
	\] 	
	Multiplying through by $C$ and taking $\log_q$, we obtain~\eqref{e.main3}.
	
	Since $A$ is non-commutative, $g\leqslant 1$ is impossible.
	Thus, it remains to consider the case $g=2$. In this case, Corollary~\ref{cor.ideal-count}(b)
	yields $r \leqslant N_{q, n, m}(g) \leqslant C^{-1} q^{g n m^2}$, and thus
	${\textstyle\dfrac{1}{nm^2}}\log_q(r\cdot C) 
	\leqslant 
	g$.	On the other hand, the upper bound on $g$ from~\eqref{e.main3} also remains
	valid, since $g =2 < {\textstyle\dfrac{1}{nm^2}}\log_q(C \cdot r)+2$.
\end{proof}

\section{Proof of Theorem~\ref{thm.main4}}	
\label{sec:I}

We begin with further estimates on $N_{q, n, m}(g)$. We will denote $|G(q, n, m)|$ by $C$
throughout, as in the statement of Theorem~\ref{thm.main3}.

\begin{lemma} \label{lem.vector-space}
Let $V$ be a $d$-dimensional vector space over $\F_q$ and let $T_g \subset V^g$ be the set of $g$-tuples
$(v_1, \ldots , v_g)$ which   span $V$. Then 
$|T_g|=q^{gd}(1-O(q^{-g}))$.
\end{lemma}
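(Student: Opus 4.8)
The statement is a standard inclusion–exclusion count, so my plan is to express $|T_g|$ via Möbius inversion over the subspace lattice of $V$, and then show that all the correction terms are swallowed by the $O(q^{-g})$. Let me set up the sieve: for each subspace $W \subseteq V$, let $f(W)$ be the number of $g$-tuples in $W^g$ (so $f(W) = q^{g\dim W} = |W|^g$), and let $h(W)$ be the number of $g$-tuples in $W^g$ that span $W$ (so $h(V) = |T_g|$). Then $f(W) = \sum_{W' \subseteq W} h(W')$, and Möbius inversion on the (modular, hence Eulerian-enough for our purposes) lattice of subspaces gives $h(V) = \sum_{W \subseteq V} \mu(W, V)\, q^{g \dim W}$, where $\mu$ is the Möbius function of the subspace lattice.

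First I would isolate the top term $W = V$, which contributes $q^{gd}$ exactly. Every other term has $\dim W \leqslant d - 1$, hence contributes at most $q^{g(d-1)}$ in absolute value, and there are at most $2^{d}$ subspaces of $V$ (a crude bound; one can also use the Gaussian binomial coefficient bound $\prod_{i}(1 + q^{-i})^{-1}\cdots$, but a constant bound depending only on $q$ and $d$ suffices). Therefore
\[
\bigl| |T_g| - q^{gd} \bigr| \;\leqslant\; \sum_{W \subsetneq V} |\mu(W,V)|\, q^{g\dim W} \;\leqslant\; 2^{d} q^{g(d-1)} \;=\; q^{gd}\cdot \bigl(2^{d} q^{-g}\bigr).
\]
Since $d$ is fixed, $2^{d}$ is an absolute constant and the right-hand side is $q^{gd}\cdot O(q^{-g})$, which is exactly the claimed estimate $|T_g| = q^{gd}(1 - O(q^{-g}))$.

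**Expected obstacle.** The only mild subtlety is justifying that the Möbius coefficients $\mu(W, V)$ are finite and, more to the point, that we need no information about their signs or exact values — only the trivial bound $|\mu(W,V)| \leqslant (\text{number of subspaces of } V)$ coming from the recursion $\mu(W,V) = -\sum_{W \subseteq W' \subsetneq V} \mu(W, W')$. (In fact $\mu(W,V)$ is known explicitly for the subspace lattice — it equals $(-1)^{d - \dim W} q^{\binom{d - \dim W}{2}}$ — but invoking this is unnecessary.) Alternatively, and perhaps more cleanly, I could avoid Möbius inversion entirely: a direct inclusion–exclusion over the (finitely many) maximal proper subspaces of $V$ already gives $|T_g| \geqslant q^{gd} - \binom{d}{1}_q q^{g(d-1)} = q^{gd}(1 - O(q^{-g}))$ for the lower bound, while $|T_g| \leqslant q^{gd}$ is trivial; since $O(q^{-g})$ absorbs both sides, this suffices for the lemma as stated. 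I would likely present this second route, as it is shorter and uses only the count of hyperplanes, $\binom{d}{1}_q = (q^d-1)/(q-1)$, which is $O(q^{d-1})$.
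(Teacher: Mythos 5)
Your argument is correct, but it takes a different route from the paper. The paper identifies $T_g$ with the set of rank-$d$ matrices in $\nMat{\F_q}{d\times g}$ and uses the exact count $\prod_{i=0}^{d-1}(q^g-q^i)$ (choosing linearly independent rows one at a time), then bounds the product below by $q^{dg}\bigl(1-\sum_{i=0}^{d-1}q^{i-g}\bigr)$; you instead bound the non-spanning tuples by a union bound over the $(q^d-1)/(q-1)$ hyperplanes of $V$ (your preferred second route), or by M\"obius inversion on the subspace lattice (your first route). Either of your routes yields the lemma, since all that is needed is a lower bound of the form $q^{gd}\bigl(1-c(q,d)\,q^{-g}\bigr)$ with $c(q,d)$ independent of $g$, together with the trivial upper bound $|T_g|\leqslant q^{gd}$; the paper's method has the small advantage of an exact formula for $g\geqslant d$, while your hyperplane argument is shorter and needs no matrix count. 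One caveat about your first route as written: the number of subspaces of $V$ is not at most $2^d$ (already for $d=2$, $q=3$ there are $6$ subspaces), and the displayed inequality also tacitly uses $|\mu(W,V)|\leqslant 1$, which fails since $|\mu(W,V)|=q^{\binom{d-\dim W}{2}}$; the argument survives because, as you note, any bound by a constant depending only on $q$ and $d$ suffices, but the line with $2^d q^{g(d-1)}$ should be replaced by such a constant times $q^{g(d-1)}$. Your second, hyperplane-based argument has no such issue and is the one to present.
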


\begin{proof}
	The cardinality of $T_g$ is the equal to the number of   matrices  in $\nMat{\F_q}{d\times g}$
     of rank $d$. When  $g\geqslant d$ this is the set of $d\times g$ matrices over $\F_q$
     whose rows are linearly independent. The
     cardinality of this set is well-known to be 
     $\prod_{i=0}^{d-1}(q^g-q^i)=q^{dg}\prod_{i=0}^{d-1}(1-q^{i-g})\geqslant q^{dg}(1-\sum_{i=0}^{d-1}q^{i-g})$.
     Since $|T_g|\leqslant q^{dg}$, the lemma follows.
\end{proof}

\begin{lemma} \label{lem.N-g-m-bounds}

\begin{enumerate}
%
\item[{\rm(a)}]
$N_{q, n, m}(g) = C^{-1}   q^{gnm^2} (1-O(q^{-g}))$.

\item[\rm{(b)}] If $n \geqslant 2$, then
$N_{q, n, m}(g) \leqslant C^{-1} (q^{gnm^2} - q^{gm^2})$. 

\item[{\rm(c)}] If $m \geqslant 2$, then
$N_{q, n, m}(g) \leqslant C^{-1} (q^{gnm^2} - q^{gn(m^2 - m + 1)})$. 
\end{enumerate}
\end{lemma}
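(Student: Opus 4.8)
The plan is to prove Lemma~\ref{lem.N-g-m-bounds} by combining the identity $N_{q,n,m}(g) = C^{-1}|S_g|$ from Corollary~\ref{cor.ideal-count}(b) with careful counting of $S_g$, the set of $g$-tuples generating $B := \nMat{\F_{q^n}}{m\times m}$ as an $\F_q$-algebra. For part (a) I would bound $|S_g|$ from above and below. The upper bound $|S_g| \leqslant q^{gnm^2}$ is immediate. For the lower bound, the key observation is that a $g$-tuple generates $B$ as an $\F_q$-algebra whenever it already spans $B$ as an $\F_q$-vector space; since $\dim_{\F_q} B = nm^2$, Lemma~\ref{lem.vector-space} (applied with $V = B$, $d = nm^2$) gives $|S_g| \geqslant q^{gnm^2}(1 - O(q^{-g}))$. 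Squeezing between these two bounds and multiplying by $C^{-1}$ yields (a).

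For parts (b) and (c) the strategy is to exhibit a proper $\F_q$-subalgebra $B' \subsetneq B$ of large cardinality and invoke Corollary~\ref{cor.ideal-count}(c), which gives $N_{q,n,m}(g) = C^{-1}|S_g| \leqslant C^{-1}(q^{gnm^2} - |B'|^g)$. In part (b), with $n \geqslant 2$, the natural choice is $B' = \nMat{\F_q}{m\times m}$, the matrix algebra over the prime-power subfield $\F_q \subsetneq \F_{q^n}$; it is a proper $\F_q$-subalgebra with $|B'| = q^{m^2}$, so $|B'|^g = q^{gm^2}$, giving the claimed bound. In part (c), with $m \geqslant 2$, I would take $B'$ to be the subalgebra of $\nMat{\F_{q^n}}{m\times m}$ consisting of matrices whose last row is supported only in the $(m,m)$-entry — concretely, block-upper-triangular matrices of shape $\begin{pmatrix} \ast & \ast \\ 0 & \ast \end{pmatrix}$ with the bottom-left $1 \times (m-1)$ block zero. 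One checks this is closed under multiplication and contains $I_m$; its $\F_{q^n}$-dimension is $m^2 - (m-1) = m^2 - m + 1$, so $|B'| = q^{n(m^2-m+1)}$, yielding the stated bound after raising to the $g$-th power.

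The main step requiring care is verifying in part (c) that the chosen $B'$ is genuinely a subalgebra (closure under multiplication is a short computation on block matrices) and, more importantly, that it is \emph{proper} and that no generating $g$-tuple can lie entirely inside it — but the latter is exactly the content of Corollary~\ref{cor.ideal-count}(c), so once $B'$ is identified there is nothing further to prove. I expect no serious obstacle: (a) is a direct consequence of Lemma~\ref{lem.vector-space}, and (b), (c) reduce to choosing the right proper subalgebra and computing its dimension. The only subtlety is making sure in each case that the ambient hypothesis ($n \geqslant 2$ for (b), $m \geqslant 2$ for (c)) is precisely what guarantees $B' \subsetneq B$: for (b), $\nMat{\F_q}{m\times m} \subsetneq \nMat{\F_{q^n}}{m\times m}$ fails exactly when $n = 1$, and for (c), the codimension $m - 1$ is positive exactly when $m \geqslant 2$.
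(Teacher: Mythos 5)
Your proposal is correct and follows essentially the same route as the paper: part (a) via $N_{q,n,m}(g)=C^{-1}|S_g|$ and Lemma~\ref{lem.vector-space} (spanning implies generating), and parts (b), (c) via Corollary~\ref{cor.ideal-count}(c) with a proper subalgebra of the right size. The only difference is cosmetic: in (c) the paper uses the subalgebra of matrices vanishing in positions $(2,1),\dots,(m,1)$, while you use its transpose-type analogue (last row supported in the $(m,m)$-entry), which has the same $\F_{q^n}$-dimension $m^2-m+1$ and yields the identical bound.
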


     \begin{proof} (a) As before, let $S_g$ be the set of $g$-tuples 
     in $\nMat{\F_{q^n}}{m\times m}^g$ which generate $\nMat{\F_{q^n}}{m\times m}$ as an
     $\F_q$-algebra. By Corollary~\ref{cor.ideal-count}(b), $N_{q, n, m}(g) = C^{-1}|S_g|$.      
     We now apply
     Lemma~\ref{lem.vector-space} with $V = \nMat{\F_{q^n}}{m\times m}$ and $d = nm^2$. Clearly every
     $g$-tuple that spans $\nMat{\F_{q^n}}{m\times m}$ as an $\F_q$-vector space
     also generates it as an $\F_q$-algebra. Thus, $|S_g| \geqslant  |T_g|=q^{gnm^2}(1-O(q^{-g}))$,
     and so $N_{q, n, m}(g) \geqslant C^{-1}   q^{gnm^2} (1-O(q^{-g}))$.
     On the other hand, $N_{q,n,m}(g)\leqslant C^{-1}   q^{gnm^2}$
     by  Corollary~\ref{cor.ideal-count}(b). 
     
     (b) follows from Corollary~\ref{cor.ideal-count}(c) with $B =\nMat{\F_{q}}{m\times m}$.
     
     (c) Let $B$ be the subalgebra of $\nMat{\F_{q^{n}}}{m\times m}$ consisting of matrices with zeroes in 
     positions $(2,1), \dots, (m,1)$, and apply Corollary~\ref{cor.ideal-count}(c). Note 
     that $|B| = q^{n(m^2-m + 1)}$.
\end{proof}

	\begin{proof}[Proof of Theorem~\ref{thm.main4}]     
     We first claim that
    \begin{equation} \label{e.main4i}
        I_0(g)=\N\cap (C^{-1}q^{(g-1)nm^2},N_{q,n,m}(g)]
    \end{equation}
    and
    \begin{equation} \label{e.main4ii}
    I_1(g) = \N \cap (N_{q,n,m}(g-1),C^{-1}q^{(g-1)nm^2}].
    \end{equation}
    Indeed, by Lemma~\ref{lem.ideals}, $\gen(A_r) = g$
     if and only $N_{q, n, m}(g-1) < r \leqslant N_{q,n,m}(g)$. On the other hand, 
     by Theorem~\ref{thm.main3}, $\gen(A_r) = g$ if and only if $r \in I_0(g) \sqcup I_1(g)$.
     Thus,
     \[ I_0(g) \sqcup I_1(g) = (N_{q, n, m}(g-1), N_{q, n, m}(g)] \, , \]
     and \eqref{e.main4ii} follows from \eqref{e.main4i}. To establish 
     \eqref{e.main4i}, note that $r > C^{-1} q^{(g-1)n m^2}$
     is equivalent to $g < \dfrac{1}{nm^2} \log_q(C \cdot r) + 1$. This proves the claim.
    
    We now turn to the proof of itself. 
    Equations \eqref{e.main4i} and \eqref{e.main4ii} tell us that
    $I_0(g) = \N \cap (C^{-1} q^{(g-1)nm^2}, N_{q, n, m}(g)]$
    and $I_1(g+1) = \N \cap (N_{q, n, m}(g), C^{-1} q^{n m^2}]$. These  intervals are, by definition,
    disjoint, and part (a) follows. 
    
    To prove part (b), we combine~\eqref{e.main4i} with Lemma~\ref{lem.N-g-m-bounds}(a):
     \begin{align*}
     |I_0(g)|&=N_{q,n,m}(g)-C^{-1}q^{(g-1)nm^2}-1+O(1)\\
     &=
     C^{-1}q^{gnm^2}(1-O(q^{-g}))-C^{-1}q^{(g-1)nm^2}-1+O(1)\\
     &=
     C^{-1}(q^{gnm^2}-q^{(g-1)nm^2})(1-O(q^{-g})) \ .
     \end{align*}
     To prove (c), we combine~\eqref{e.main4ii} 
     with Lemma~\ref{lem.N-g-m-bounds}(b):
      \begin{align*}
     |I_1(g)|&\geqslant 
     \lfloor C^{-1}q^{(g-1)nm^2}-N_{q,n,m}(g-1)\rfloor \geqslant \lfloor C^{-1}q^{gm^2} \rfloor \ .
     \end{align*}
     Similarly, to prove (d), we combine~\eqref{e.main4ii} with Lemma~\ref{lem.N-g-m-bounds}(c):
     \[
     |I_1(g)|\geqslant
     \lfloor C^{-1}q^{(g-1)nm^2}-N_{q,n,m}(g-1) \rfloor \geqslant \lfloor C^{-1}q^{gn(m^2-m+1)} \rfloor \ . 
		\qedhere    
     \]
     \end{proof}

\end{document}